\title{\bf Spanning trees with many leaves: new lower bounds in terms of number of vertices of degree~3 
and at least~4}
\author{
         D.\,V.\,Karpov\thanks{}
     \\[6pt]
          {\small E-mail: \texttt{dvk0@yandex.ru}       }
    %\end{tabular}
    }
\date{}
\begin{document}
\maketitle
\righthyphenmin=2
\renewcommand*{\proofname}{\bf Proof}
\newtheorem{thm}{Theorem}
\newtheorem{lem}{Lemma}
\newtheorem{cor}{Corollary}
\theoremstyle{definition}
\newtheorem{defin}{Definition}
\theoremstyle{remark}
\newtheorem{rem}{\bf Remark}

\def\N{{\rm N}}
\def\q#1.{{\bf #1.}}
\def\I{{\rm Int}}
\def\R{{\rm Bound}}
\def\mmin{\mathop{\rm min}}

\centerline{\sc Abstract}

We prove, that every connected graph with $s$ vertices of degree~3 and~$t$ vertices of degree at least~4
has a spanning tree  with   at least ${2\over 5}t +{1\over 5}s+\alpha$  leaves, where $\alpha \ge {8\over 5}$.
Moreover, $\alpha \ge 2$ for all graphs besides three exclusions. All exclusion are  regular graphs of degree~4, they are explicitly described in the paper.

We present infinite series of graphs, containing only vertices of degrees~3 and~4, for which the maximal number of leaves in a spanning tree is equal for ${2\over 5}t +{1\over 5}s+2$. Therefore we prove that our bound
is tight.

\section{\bf Introduction. Basic notations}

We consider unoriented graphs without loops and multiple edges.
We use standart notations. For a graph $G$ we denote the set of its vertices by  $V(G)$ and the set of its edges by $E(G)$. We use notations  $v(G)$ and $e(G)$ for the number of vertices and edges of $G$, respectively. 

We denote the {\it degree} of a vertex $x$ in the graph $G$ by $d_G(x)$.
For any set of vertices~$W\subset V(G)$ we denote by~$d_{G,W}(x)$ the number of vertices of the set~$W$, which are adjacent to~$x$ 
in the graph~$G$. As usual, we denote the minimal vertex degree of the graph~$G$ by~$\delta(G)$. 
 
Let $\N_G(x)$ denote the {\it neighborhood}  of a vertex  $w\in V(G)$ (i.e. the set of all vertices, adjacent to~$w$).

For any edge $e\in E(G)$ we denote by $G\cdot e$ the graph, in which the ends of the edge $e=xy$ are {\it contracted} into one vertex, which is incident to all edges, incident in~$G$ to at least one of the vertices~$x$ and~$y$. Let us say that the graph $G\cdot e$ is obtained from $G$ by {\it contracting} the edge $e$.

We call a set of vertices~$R\subset V(G)$ a {\it cutset} if the graph~$G-R$ is disconnected.

\begin{defin}
For any connected graph~$G$ we denote by~$u(G)$  the maximal number of leaves in a spanning tree of the graph~$G$. 
\end{defin}

\begin{rem}
Obviously, if  $F$ is a tree, then   $u(F)$ is the number of its leaves.
\end{rem}

Several papers about lower bounds on $u(G)$ are published.  
One can see details of the history of this question in~\cite{KB}. We shall recall only results, directly concerned with our work.

In 1981 Linial formulated a  conjecture:  
$$u(G)\ge {d-2\over d+1}v(G) + c \quad as \quad \delta(G)\ge d\ge 3,$$
 where a constant $c>0$ depends only on~$d$. The  ground  for this  conjecture is the  following:  for every  $d\ge 3$
one can easily construct infinite series of graphs with   minimal degree  $d$, for 
which  ${u(G)\over v(G)}$ tends to $d-2\over d+1$.

It follows from the works~\cite{Alon,DJ,YW} that for $d$ large enough  Linial's conjecture fails.  However, we are interested in the case of  small~$d$.

In 1991 Kleitman and West~\cite{KW} proved, that $u(G)\ge {1\over 4}\cdot v(G)+2$ as~$\delta(G)\ge 3$ and~$u(G)\ge {2\over 5}\cdot v(G)+{8\over 5}$ as~$\delta(G)\ge 4$.
In~1996 Griggs and Wu~\cite{JGM} once again proved the statement for~$\delta(G)\ge4$
and proved, that~$u(G)\ge {1\over 2}\cdot v(G)+2$ as~$\delta(G)\ge 5$. 
Hence, Linial's conjecture holds for~$d=3$, $d=4$ and~$d=5$, for~$d>5$ the question remains open.

In~\cite{KW} a more strong Linial's conjecture was mentioned: 
   $$u(G)\ge \sum_{x\in V(G)} {d_G(x)-2\over d_G(x)+1}$$ 
for a connected graph~$G$ with $\delta(G)\ge 2$. 
Clearly, this conjecture is not true, since weak Linial's conjecture fails for large degrees.
We present infinite series of connected graphs which vertices have degrees~3 and~4, disproving this conjecture. Therefore, the strong  conjecture fails not only for  huge degrees, coming to us from probabilistic
methods, but even for degrees~3 and~4.

However, strong Linial's  conjecture inspires attempts to obtain a lower bound on~$u(G)$, in which contribution of each vertex  depends on its degree. But how much must be  the contribution of a vertex of degree~$d$?

N.\,V.\,Gravin~\cite{Gr} proved for a connected graph with~$v_3$  vertices of degree~3 and~$v_4$ vertices of degree at least~4,  that  $u(G)\ge {2\over 5}\cdot v_4 +{2\over 15}\cdot v_3$. 
In this paper vertices of degrees~1 and~2 are allowed  in the graph.
There is no doubt that the constant~$2\over 5$ is optimal, but the constant~$2\over 15$ 
can be replaced by greater one, as it is shown in our main theorem.

\begin{thm}
\label{u34} 
Let $G$ be a connected graph with at least two vertices, 
$s$  is the number of vertices of degree~$3$, and~$t$ is the number of vertices of degree
at least~$4$ in~$G$. Then~$u(G)\ge {2\over 5 } t + {1\over 5} s +\alpha $, where~$\alpha\ge {8\over5}$.  
Moreover, $\alpha\ge 2$ for all graphs besides three exclusions: $C_6^2$, $C_8^2$ (squares of cycles on~$6$ and~$8$ vertices) and~$G_8$ --- a regular graph of degree~$4$ on~$8$ vertices, shown on figure~$1$. 
\end{thm}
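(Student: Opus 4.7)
The plan is to analyze a spanning tree $T$ of $G$ that maximizes the number of leaves and combine standard extremal/swap arguments with a discharging scheme tailored to the two vertex classes weighted $\frac{2}{5}$ and $\frac{1}{5}$. The weight $\frac{2}{5}$ matches the Kleitman--West bound for $\delta(G)\ge 4$, while $\frac{1}{5}$ strictly improves Gravin's constant $\frac{2}{15}$; the gain comes from extracting more from each internal degree-$3$ vertex of $T$ through tighter swap restrictions and a refined accounting of non-tree edges.

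First, I would fix $T$ with $u(G)$ leaves and partition $V(G)$ into the leaf set $L$ of $T$ and the internal vertices, refining the latter by (a) their degree in $G$, so by whether they count towards $s$, towards $t$, or towards neither, and (b) the number of leaves of $T$ adjacent to them in $T$. Maximality of $T$ yields local constraints: for any non-tree edge $xy$ with $y$ a leaf of $T$, rerouting through $xy$ must not increase the leaf count; for any edge $uv$ of $T$ with $u,v$ both internal, analogous exchange operations are forbidden. I would catalogue these constraints into a short list of forbidden local configurations around each internal vertex.

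Next, I would apply a discharging argument. Assign each vertex of $G$ an initial charge equal to its coefficient in the target bound ($\frac{2}{5}$ for the $t$-vertices, $\frac{1}{5}$ for the $s$-vertices, $0$ otherwise), plus a small global surplus summing to $\frac{8}{5}$. Using the constraints above, redistribute charge from each internal vertex of $T$ to an adjacent leaf so that every leaf receives charge at least $1$; this gives $u(G)\ge\frac{2}{5}t+\frac{1}{5}s+\frac{8}{5}$. The degree-$3$ analysis forces the smaller coefficient $\frac{1}{5}$, since a degree-$3$ internal vertex of $T$ has at most two leaf-neighbors in $T$ and only a restricted amount of charge to pass on; the discharging rules must keep this case in balance with the degree-$\ge 4$ case where up to three or more leaf-neighbors are available.

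The main obstacle, and the step where most of the paper's case analysis will live, is tightening $\alpha$ from $\frac{8}{5}$ to $2$ and identifying exactly the three exceptional $4$-regular graphs $C_6^2$, $C_8^2$, $G_8$. For this I would pinpoint every place in the discharging argument where the value $\alpha=\frac{8}{5}$ can actually be attained, extract the rigid local structure of $G$ and $T$ forced by simultaneous equality in every such place (every internal vertex being of a specific extremal type, every leaf attached by a single tree edge to such a vertex, and the non-tree edges forming a prescribed pattern), and argue that a globally tight graph must be $4$-regular on a very small vertex set. A finite enumeration of such candidates on $6$ and $8$ vertices then isolates exactly the three listed exceptions; confirming by direct construction that each of them really attains the bound closes the proof. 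Carrying out this equality analysis cleanly, without overlooking a sporadic small example, is the delicate part.
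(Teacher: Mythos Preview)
Your plan is a genuinely different strategy from the paper's, but as written it is only a strategy, not a proof: the entire technical content is missing. You propose to fix an optimal spanning tree, extract local swap constraints from maximality, and then discharge from internal vertices to leaves so that every leaf ends with charge at least $1$. But you never state a single discharging rule, never verify that any rule set balances, and never show that the swap constraints you allude to are strong enough to force the balance. Getting the precise constants $\tfrac{2}{5}$ and $\tfrac{1}{5}$ simultaneously, together with the additive $\tfrac{8}{5}$ (let alone $2$), is exactly where all the work lies; announcing that a discharging scheme ``tailored to the two vertex classes'' will do it is not a proof step. The equality analysis you sketch for isolating $C_6^2$, $C_8^2$, $G_8$ is likewise only a promise: ``a finite enumeration of such candidates on $6$ and $8$ vertices'' presupposes you have already shown that tightness forces $4$-regularity and at most $8$ vertices, which you have not.

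For contrast, the paper does \emph{not} analyze a fixed optimal tree at all. It builds the tree algorithmically from a carefully chosen base, tracking the potential $\alpha'(F)=\tfrac{13}{15}u(F)+\tfrac{2}{15}b(F)-c_G(F)$, where $b(F)$ counts ``dead'' leaves (leaves whose entire $G$-neighborhood is already in the tree). Each growth step is one of a long but finite list (types $A$, $M$, $N$, $Z$ with many subcases), and for each the paper computes a ``profit'' and shows it is nonnegative; two reduction rules and an inductive cut (step~$Z4$) handle the remaining situations. The exact additive constant and the three exceptions emerge from a separate analysis of which base trees and which terminal step sequences can fail to reach $\alpha'\ge 2$. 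The coefficients $\tfrac{13}{15}$ and $\tfrac{2}{15}$ are not arbitrary: they are tuned so that every step type clears, and finding them is part of the proof. Your discharging outline would need an analogue of this tuning and an analogue of the full case analysis; neither is present.
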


\begin{figure}[!hb]
	\centering
		\includegraphics[width=1\columnwidth, keepaspectratio]{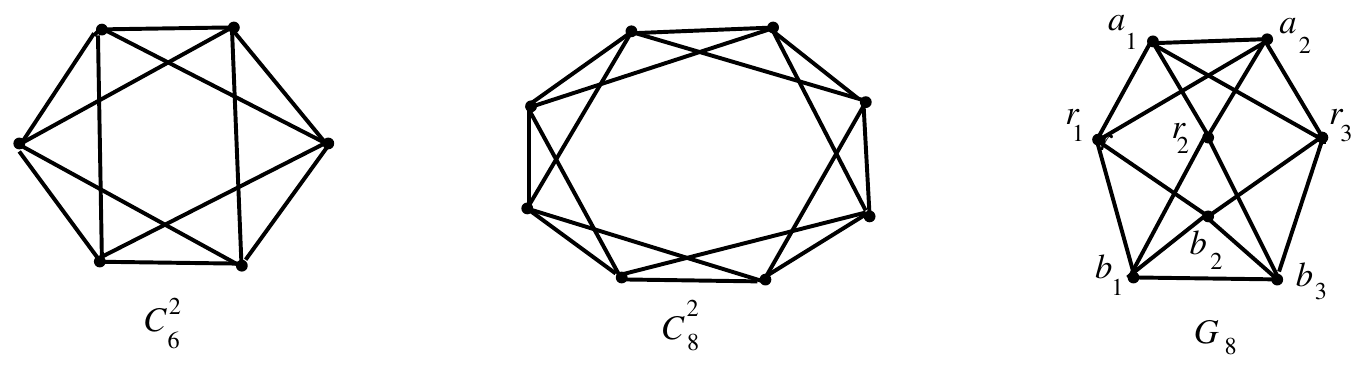}
     \caption{Graphs-exclusions.}
	\label{fig1}
\end{figure}

Note, that all three constants of this bound are optimal. There exist infinite series of examples, for which this bound is attained. We present series of such graphs, containing only vertices of degree~3 and~4. 

The proof of this theorem would be much shorter, if we exclude from the theorem the last statement.
Our interest to findning the exact additive constant is inspired by desire to obtain  a tight bound which is not a tip effect. The bound with~$\alpha={8\over 5}$ is attained for the only graph~$C_6^2$!
However, there are different infinite series of examples for~$\alpha=2$, 
and one can see  this additive constant in lower bounds for graphs with minimal degrees~3 or~5. 
Possibly, by this reason Kleitman and West~\cite{KW} have conjectured, that there are only two connected graphs with $\delta(G)\ge 4$ for which the bound~$u(G)\ge {2\over 5} v(G) +2$ fails:~$C_6^2$ and~$C_8^2$. Moreover,  
it is proved in~\cite{KW}, that for graphs with minimal degree~4 an exclusion must be a 4-regular graph and each its edge must belong to a triangle.

Kleitman and West  in their conjecture about graphs-exclusions didn't find only the graph~$G_8$ on eight vertices (see figure~1).
However, with the method from~\cite{KW} one cannot even prove, that the set of graphs-exclusions is finite.  
We shall prove in theorem~\ref{u34} the similar statement about graphs-exclusions for more general problem.

\section{Proof of theorem 1}

Let us introduce necessary notations.

\begin{defin}
Let $H$ be an arbitrary graph.
We denote by~$S(H)$ the set of all vertices of degree~3 of the graph~$H$, 
and by~$T(H)$ --- the set of all vertices of degree at least~4 of the graph~$H$. 

Let~$x\in V(H)$. We set that the  {\it cost} $c_H(x)$ of the vertex~$x$ in the graph~$H$~is
$$c_H(x) = \left\{ \begin{array}{ll}    {2\over 5} & \mbox{ as } x\in T(H), \\[3pt]
                                         {1\over 5}  & \mbox{ as } x\in S(H), \\ [1pt] 
                                          0          & \mbox{ as } x\notin T(H)\cup S(H). \\
                        \end{array} \right. $$
The {\it cost}  of the graph $H$ is
$$c(H)={2\over 5} |T(H)| +{1\over 5}|S(H)| =\sum_{x\in V(H)} c_H(x).$$ 
For any set of vertices~$U\subset V(H)$ we set, that the {\it cost} of this set in the graph~$H$ is~$c_H(U)=\sum_{x\in U} c_H(x)$. 
For  any tree~$F$, which is a subgraph of the graph~$H$, we set that its {\it cost in the graph~$H$}
is~$c_H(F)=c_H(V(F))$.

For any spanning tree~$F$ of the graph~$H$  we set the notation~$\alpha(F)=u(F)-c(H)$.
Let~$\alpha(H)$ be the maximum of~$\alpha(F)$ over all spanning trees~$F$ of the graph~$H$.
\end{defin}

\begin{rem}
It follows directly from the definition, that~$u(G)=c(G)+\alpha(G)$. Hence we want to prove, that~ $\alpha(G)\ge 2$  for almost all 
connected graphs~$G$.
\end{rem}

As usual,   during  construction of the desired spanning tree for a graph~$G$ we assume, that the theorem has been proved for all smaller graphs.

\subsection{Reduction rules}

At first we transform the graph such that it would be covinient to work with it. 
Let us describe two reduction rules.

\begin{figure}[!hb]
	\centering
		\includegraphics[width=1\columnwidth, keepaspectratio]{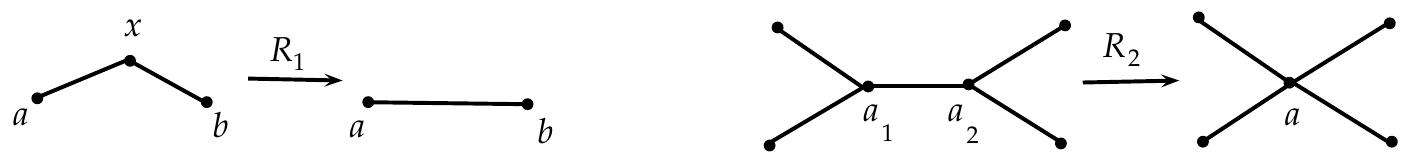}
     \caption{Reduction rules}
	\label{fig2}
\end{figure}

\smallskip
\q{R1}. {\it Let $x\in V(G)$, $d_G(x)=2$, $\N_G(x)=\{a,b\}$ and the vertices~$a$  and~$b$ are not adjacent. } 

\noindent We reduce the graph~$G$ to~$G'= G-x+ab$.  Obviously, $c(G')=c(G)$.

\smallskip

\q{R2}. {\it Let  $a_1,a_2\in S$ are adjacent vertices and $\N_G(a_1)\cap \N_G(a_2) =\varnothing$.}

\noindent We reduce the graph~$G$ to~$G'=G\cdot a_1a_2$. Let~$a$ be the vertex, obtained by gluing the vertices~$a_1$ and~$a_2$.
Clearly, $d_{G'}(a)=4$, then $c(G')=c(G)$.

\smallskip
In both cases one can easily transform  a spanning tree~$F'$ of the graph~$G'$ into a spanning tree~$F$ of the graph~$G$ with~$u(F)\ge u(F')$
and, therefore, $\alpha(F)\ge \alpha(F')$. Thus it is easy to see, that~$\alpha(G)\ge \alpha(G')$.

\begin{rem}
 \label {r1}
Later we may assume, that  considered graph satisfy the following conditions:

$1^\circ$  any  vertex of degree~2 forms a triangle with two vertices of its neighborhood;

$2^\circ$ for any two adjacent vertices of degree~3 their neighborhoods have nonempty intersection.
\end{rem}

\subsection{Dead vertices method. General description}

To prove the theorem we shall construct the desired spanning tree using the method of {\it dead vertices},
as in works~\cite{KW,JGM}.

\begin{defin}
Let a tree~$F$ be a subgraph of a connected graph~$G$.

We say that a leaf~$x$ of the tree~$F$ is  {\it dead}, if~$N_G(x)\subset V(F)$ and {\it alive}
otherwise. We denote by~$b(F)$ the number of dead leaves of the tree~$F$.

We set~$\alpha'(F)= {13\over 15}u(F) + {2\over15} b(F) -c_G(F)$.
\end{defin}

\begin{rem}
$1)$ It is easy to see, that dead leaves remain dead during all next steps of the construction. 
When the algorithm stops and we obtain a spanning tree, all its leaves will be dead.

$2)$ Since all leaves of a spanning tree are dead, we have~$\alpha'(F)=\alpha(F)$.
\end{rem}

We construct a spanning tree in~$G$  successively, adding the vertices in several steps.
Let~$S=S(G)$ and~$T=T(G)$.

%%!
Let us describe in details a step of our algorithm  (let's call this step~$A$).
Let we have a tree~$F$ before the step~$A$  (of course, $F$ is a subgraph of the graph~$G$).

We denote by~$\Delta u$ and~$\Delta b$ the {\it increase}  of the number of leaves and dead leaves in the tree~$F$, respectively,
on the  step~$A$, by~$\Delta t$  and~$\Delta s$ 
--- the number of added on this step  to the tree~$F$ vertices from~$T$ and~$S$, respectively.

We call by the {\it profit} of the step~$A$ the value
$$p(A)={13\over 15}\Delta u + {2\over15} \Delta b - {2\over5} \Delta t - {1\over5} \Delta s.$$

Let~$F_1$  be the tree obtained after the step~$A$. Clearly, $\alpha'(F_1)=\alpha'(F)+p(A)$.
We shall perform only steps with non-negative profit. 

At first we  describe all possible steps  and after that consider  beginning   of the construction
and estimate~$\alpha(T)$ for the constructed tree~$T$.

We denote by~$W$ the set of all vertices, which  do not belong to the tree~$F$.

Vertices of the set~$W$, which are adjacent to at least one vertex of the set~$V(F)$, are called 
{\it vertices of level}~1. Vertices of the set~$W$, which do not belong to level~1 and are adjacent to at least one vertex of level~1, are called  {\it vertices of level}~2.

For each vertex~$x\in W$  we denote by~$P(x)$ the set of all  adjacent to~$x$ vertices of the set~$V(F)$.

\subsection{A step of the algorithm}

We shall try to perform next step of the algorithm in the following way. We shall pass to the next variant of the step only when all 
previous variants are impossible.  We shall not note this during   description of  steps.
We begin with the step, which in fact is not a step, but will help us in  description of other steps.

\smallskip
\q{Z0}. {\it A leaf~$v$ of the tree~$T$, calculated as alive, appears dead.}

\noindent We do not transform the tree on this step. We take into account  information about~$v$ and obtain 
$$\Delta u=0, \quad \Delta b=1,\quad p(Z0)={2\over 15}.$$

\begin{rem}
 \label{rst}
During the description of steps we consider as alive  all leaves of the tree~$F$, which are not said to be dead.  Adding an extra dead vertex will be recorded as a step~$Z0$.
\end{rem}

Let us begin with some easy steps. At first four variants new leaves are added to the tree.

\smallskip
\q{A1}.  {\it There is a non-pendant vertex~$x$ of the tree~$F$, adjacent to~$y\in W$.} 

\noindent Then we adjoin~$y$ to~$x$. 
Clearly, $$\Delta u = 1, \quad  \Delta b =0, \quad p(A1) \ge {13\over 15}-{2\over 5} = {7\over 15}.$$

\smallskip \goodbreak
\q{A2}.  {\it There is a vertex   $x\in V(F)$ with~$d_{G,W}(x)\ge 2$.} 

\noindent Then we adjoin to the tree two adjacent to~$x$  vertices of the set~$W$. 
Clearly,
$$\Delta u = 1, \quad \Delta b=0, \quad  p(A2) \ge  {13\over 15} - 2\cdot{2\over 5} =  {1\over 15}. $$

\smallskip
\q{A3}.  {\it There is a vertex~$x$ of level~$1$, such that~$d_{G,W}(x)\ge 3$.}

\noindent
At first we adjoin to the tree~$F$ the vertex~$x$ and after that we adjoin three vertices of the set~$W$ adjacent to~$x$. 
The cost of four added vertices is not more than  $4\cdot{2\over 5}$, and we obtain
$$\Delta u=2, \quad \Delta b=0, \quad p(A3) \ge 2\cdot {13\over 15} - 4\cdot{2\over 5}   
= {2\over 15}. $$

\begin{figure}[!hb]
	\centering
		\includegraphics[width=0.8\columnwidth, keepaspectratio]{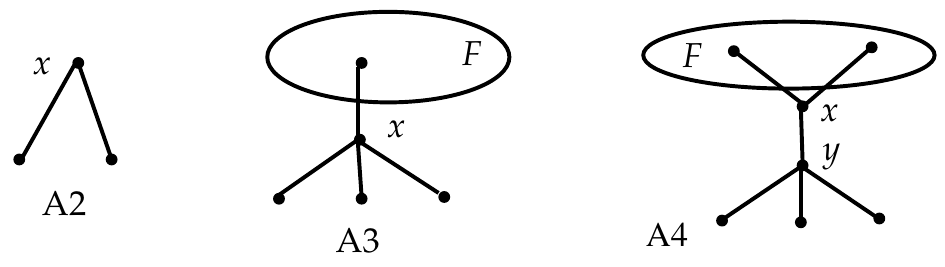}
     \caption{Steps of type~A.}
	\label{fig3}
\end{figure}

\begin{rem}
 \label{rl1}
Hereafter we assume that non-pendant vertices of the tree~$F$ are not adjacent to vertices of the set~$W$, 
any leaf of the tree~$F$ is adjacent to not more than one vertex of the set~$W$ and, finally,
any vertex of level~1 is adjacent to not more than two vertices of the set~$W$. 

In particular, if  $x\in T$  is a vertex of level~1, then~$|P(x)|\ge 2$ and after adjoining of the vertex~$x$ to the tree
at least one vertex of the set~$P(x)$ becomes a dead leaf of the obtained tree. 
\end{rem}

\q{A4}.  {\it There exists a vertex~$x\in S$ of level~$1$, adjacent to exactly one vertex of the set~$W$ --- a vertex~$y\in T$  of level~$2$.}

\noindent  
At first we adjoin to the tree~$F$ the vertices~$x$, $y$. Since $y$ is not adjacent to the tree~$F$,   there are three  vertices of the set~$W$, which are adjacent to~$y$ and different from~$x$.
We adjoin these three vertices to the tree. Then 
$$ \Delta u=2, \quad \Delta b =1, \quad   p(A4) \ge 2\cdot {13\over 15} +  {2\over 15}- {1\over 5} - 4\cdot{2\over 5} = {1 \over 15}. $$

\smallskip
Further on we consider a more complicated case. 

\smallskip

\q{M}.  {\it There is a vertex~$x\in T$ of level~$1$, such that~${d_{G,W}(x)=2}$.}

\noindent We adjoin the vertex~$x$ to the tree. Note, that~$c_G(x)={2\over 5}$. The vertex~$x$ is adjacent to at least two leaves
of the tree~$F$ (see figure~\ref{fig4}), hence at least one of these leaves becomes dead.
Then we adjoin to the tree two vertices~$y_1,y_2\in W$, adjacent to~$x$ (in fact, it is  the step~$A2$). 
Taking into account written above, we obtain
$$ \Delta u=1, \quad  \Delta b=1, \quad
p(M)\ge {2\over 15} - {2\over 5}+ p(A2) \ge -{3\over 15}.$$

\q{N}.  {\it There is a vertex~$x\in S$ of level~$1$, such that~${d_{G,W}(x)=2}$.}

\noindent We adjoin to the tree the vertex~$x$ and two vertices~$y_1,y_2\in W$, adjacent to~$x$.
Since~$c_G(x)={1\over 5}$, similarly to the previous case  we obtain
$$ \Delta u=1, \quad  \Delta b=0, \quad p(N)=  - {1\over 5}+ p(A2) \ge -{2\over 15}.$$

\smallskip
{\it We do not consider that steps~$M$ and~$N$ are finished. We have added to the tree three vertices~$x$, $y_1$, $y_2$. However, let~$F$ 
 is still the tree, constructed after previous  {\tt finished} step.
After performing  step~$M$ or~$N$ we have:
 
-- each leaf of the tree~$F$ is adjacent to not more than one vertex of the set~$W$; 

-- each vertex of level~$1$ is adjacent to not more than two vertices of the set~$W$.

We aim to perform a step with the profit at least~$3\over 15$.}

Let us continue case analysis.

\smallskip
\q{1}.   $y_1,y_2\not\in T$.

\noindent These vertices cost cheaper, than it was calculated above. Hence the profit increases by at least~$2\over 5$ and we obtain
$$\Delta u=\Delta b=0 , \quad p(1) \ge {6\over 15}.$$

\medskip \goodbreak
{\it We set~$W_1=W\setminus \{x,y_1,y_2\}$.

If only one of vertices~$y_1$ and~$y_2$ belongs to the set~$T$, we assume that~${y_1\in T}$.

If~$y_1, y_2\in T$, we assume, that $d_{G,W_1}(y_1)\ge d_{G,W_1}(y_2)$.
}
\smallskip

\begin{figure}[!hb]
	\centering
		\includegraphics[width=1\columnwidth, keepaspectratio]{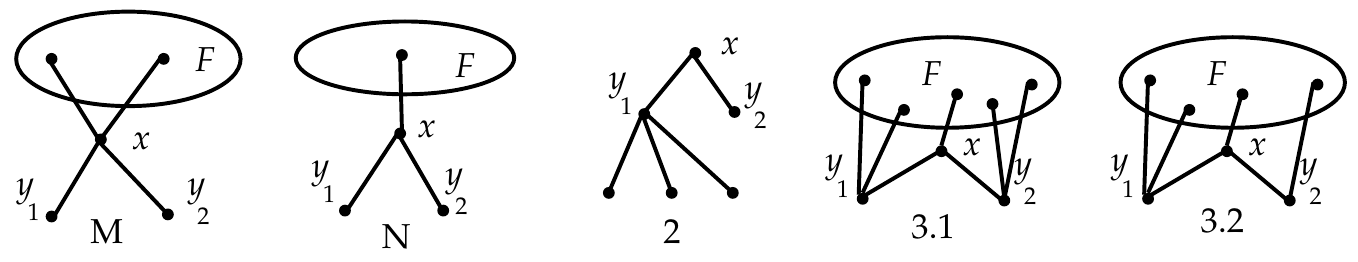}
     \caption{Steps $M$, $N$, 2 and 3}
	\label{fig4}
\end{figure}

 \goodbreak
\q{2}.  $d_{G,W_1}(y_1)\ge 3$.

\noindent 
We adjoin to the tree three vertices of the set~$W_1$, adjacent to~$y_1$ (in fact, we perform the steps~$A2$ and~$A1$). 
We obtain
$$\Delta u = 2,  \quad \Delta b = 0,\quad p(2) = p(A1) + p(A2) \ge {8\over 15}.$$

\smallskip \goodbreak
\q{3}.    $d_{G,W_1}(y_1)\le 1$.

\noindent The vertex~$y_1$ is adjacent to not more than three vertices of the set~$W$: they are~$x$ and, possibly, $y_2$ and one vertex of the set~$W_1$. Since~$d_G(y_1)\ge 4$, then  $y_1$ is adjacent to the tree~$F$, i.e. a vertex of level~1. By remark~\ref{rl1} we have~$|P(y_1)|\ge 2$, hence  the number of
dead leaves  increases by at least~2.  Consider two cases.

\q{3.1}. {\it If $y_2\in T$}, then by the choice of the vertex~$y_1$ we have~$d_{G,W_1}(y_2)\le 1$.
Similarly to written above for the vertex~$y_1$,  we have two additional dead leaves, adjacent to~$y_2$. In this case
$$\Delta u=0, \quad \Delta b=4,  \quad p(3.1) =  4\cdot {2\over 15} = {8\over 15}.$$

\q{3.2}. {\it If  $y_2\not\in T$}, then the vertex~$y_2$ costs  cheaper than we have calculated, hence the profit increases by~$1\over 5$ and we have
$$\Delta u=0,\quad \Delta b=2, \quad p(3.2) \ge {1\over 5} +  2\cdot {2\over 15} = {7\over 15}.$$

\smallskip
\q{4}.  $d_{G,W_1}(y_1)=2$.

\noindent Let~$z_1$ and~$z_2$  be two adjacent to~$y_1$ vertices of the set~$W_1$.
We adjoin~$z_1$ and~$z_2$ to the tree (it is a step~$A2$) and obtain
$$\Delta u=1,\quad \Delta b=0, \quad p(4) = p(A2)\ge {1\over 15},$$ that is not enough. Let us continue  case analysis.

\smallskip \goodbreak
\q{4.1}. {\it Among~$y_2,z_1,z_2$ there is a vertex adjacent to the tree~$F$. }
 
\noindent For example, let~$z_1$ be adjacent to the tree~$F$, i.e.~$z_1$ is a vertex of level~$1$.
For other vertices the reasoning is quite similar.

\q{4.1.1}. {\it If~$z_1\in T$}, then by Remark~\ref{rl1} the vertex~$z_1$ must be adjacent to at least two  leaves of the tree~$F$,  
hence the number of dead leaves increases by two and we obtain
$$\Delta u=1,\quad \Delta b=2, \quad p(4.1.1) \ge p(4) +  2\cdot {2\over 15}  \ge {5\over 15}.$$

\q{4.1.2}. {\it If~$z_1\not\in T$}, then the vertex~$z_1$ adds one dead leaf and the cost of~$z_1$ decreases by at least~$1\over 5$. 
Hence we obtain 
$$\Delta u=1,\quad \Delta b=1, \quad p(4.1.2) \ge p(4)+ {1\over 5} +  {2\over 15}\ge {6\over 15} .$$

\smallskip
{\it Further on we consider the case when the vertices~$y_2,z_1,z_2$ are not adjacent to the tree~$F$.}

\smallskip
\q{4.2}. {\it Among~$y_2,z_1,z_2$ there is a vertex not from the set~$T$. }

\noindent  This vertex increases profit by at least~${1\over 5}$ and we obtain
$$\Delta u=1, \quad \Delta b=0, \quad p(4.2)  \ge p(4) +  {1\over 5}  \ge {4\over 15}.$$

\begin{figure}[!hb]
	\centering
	\includegraphics[width=1\columnwidth, keepaspectratio]{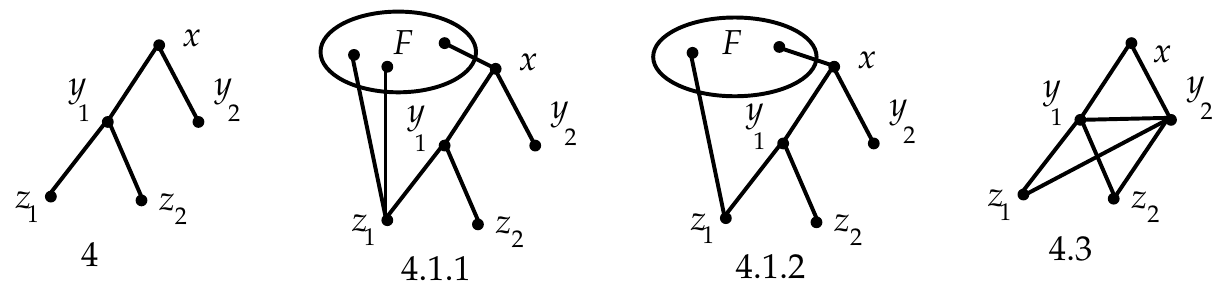}
     \caption{Steps 4, 4.1.1, 4.1.2 and 4.3}
	\label{fig5}
\end{figure}

\smallskip
\q{4.3}. {\it $\N_G(y_2)=\{x,y_1,z_1,z_2\}$. }

\noindent
In this case the vertex~$y_2$ is a dead leaf and we obtain
$$\Delta u=1,\quad \Delta b=1, \quad p(4.3)  \ge p(4) + {2\over 15} = {3\over 15}.$$

\begin{rem}
\label{rm33}
Let us summarize the analyzed cases. 
In remaining cases the vertices~$y_1,y_2,z_1,z_2$ belong to the set~$T$ and to the level~2.
Moreover, $d_{G,W_1}(y_1)=d_{G,W_1}(y_2)=2$, hence,  the vertices~$y_1$ and~$y_2$ are adjacent and~$d_G(y_1)=d_G(y_2)=4$.

The vertex~$y_2$ cannot be  adjacent to  both vertices~$z_1$ and~$z_2$. Without loss of generality we assume, that~$y_2$ is not adjacent to~$z_1$. Then the vertex~$z_1$  is adjacent to at least two vertices of the set~$W_2=W\setminus \{x,y_1,y_2,z_1,z_2\}$.  
\end{rem}

\q{4.4}. $d_{G,W_2}(z_1)\ge 3$.

\noindent We adjoin to the tree three vertices of the set~$W_2$ adjacent to~$z_1$ (in fact, we perform a step~$A2$ and a step~$A1$). 
We obtain $$\Delta u=3, \quad \Delta b=0, \quad p(4.4) \ge p(4)+ p(A2)+p(A1) \ge {9\over 15}.$$

\smallskip 
\q{4.5}. $d_{G,W_2}(z_1)=2$.

\noindent Denote by~$p_1$ and~$p_2$ two adjacent to~$z_1$ vertices of the set~$W_2$ and adjoin these two vertices to the tree (see.\,figure~6).
We have performed a step~$A2$ and obtain 
$$p(4.5) \ge p(4) +  p(A2) \ge {2\over 15}.$$ 
This is not enough for us, let's continue case analysis.

\smallskip
\q{4.5.1}. {\it Among~$p_1,p_2$ there is a vertex  of the set~$T$, which is adjacent with the tree~$F$.}

\noindent
Let it be~$p_1$. By Remark~\ref{rl1} the vertex~$p_1$ is adjacent with at least two leaves of the tree~$F$, hence, the number of dead leaves increases by at least~2 and we obtain
$$\Delta u=2, \quad \Delta b=2, \quad p(4.5.1) \ge  p(4.5) +  2\cdot {2\over 15}  \ge {6\over 15}.$$

\begin{figure}[!hb]
	\centering
		\includegraphics[width=0.9\columnwidth, keepaspectratio]{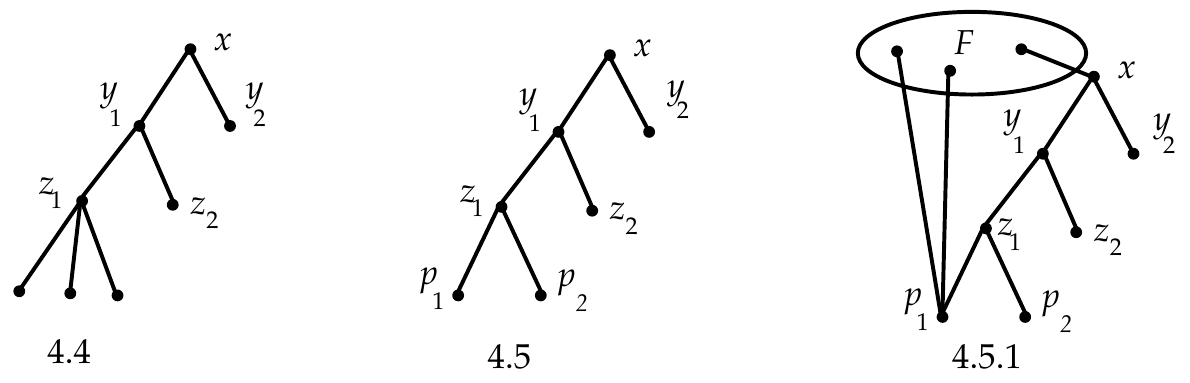}
     \caption{Steps 4.4, 4.5, 4.5.1}
	\label{fig6}
\end{figure}

\smallskip \goodbreak
\q{4.5.2}. {\it  Among~$p_1,p_2$  there is a vertex not from the set~$T$.}

\noindent
In this case the profit increases by at least~${1\over 5}$,  hence, we obtain
$$\Delta u=2, \quad \Delta b=0, \quad p(4.5.2) \ge p(4.5) +  {3\over 15}  \ge {5\over 15}.$$

\smallskip

\q{4.5.3}. {\it Among~$y_2,z_2,p_1,p_2$ there is a vertex, which is not adjacent to any vertex of the set~$W_3=W\setminus \{x,y_1,y_2,z_1,z_2,p_1,p_2\}$.}

\noindent This vertex is an extra dead leaf of the constructed tree, hence, the profit increases by at least~${2\over 15}$ and we obtain
$$\Delta u=2, \quad \Delta b=1, \quad p(4.5.3) \ge p(4.5) +  {2\over 15}  \ge {4\over 15}.$$

\begin{rem}
\label{rm352}
 Therefore, all vertices $y_1,y_2,z_1,z_2,p_1,p_2$ belong to the set~$T$ and are not adjacent to the tree~$F$. 
Each of the vertices~$y_2,z_2,p_1,p_2$ is adjacent to at least one vertex of the set~$W_3$.
\end{rem}

\smallskip \goodbreak
\q{4.5.4}.  $d_{G,W_3}(p_1)\ge 2$ {\it or } $d_{G,W_3}(p_2)\ge 2$.

\noindent  Without loss of generality we assume, that~$d_{G,W_3}(p_1)\ge 2$. We adjoin to the tree  two vertices~$q_1,q_2\in W_3$, adjacent 
to~$p_1$ (i.e. we perform a step~$A2$).  We obtain 
$$\Delta u=3, \quad  \Delta b=0, \quad  p(4.5.4) \ge p(4.5) +  p(A2)  \ge {3\over 15}.$$

\smallskip \goodbreak
\q{4.5.5}. {\it $d_{G,W_3}(p_1)=d_{G,W_3}(p_2)= 1$.}

\noindent  
Then the vertex~$p_1$ is adjacent to at least two of vertices~$p_2,y_2,z_2$, and the vertex~$p_2$ is adjacent to at least two of vertices~$p_1,y_2,z_2$. Let us remind, that by remark~\ref{rm33} the vertices~$y_1$ and~$y_2$ are adjacent and~$d_G(y_1)=d_G(y_2)=4$.
Since~$y_2$ is adjacent to at least one vertex from~$W_3$, then~$y_2$ cannot be adjacent to both vertices~$p_1$ and~$p_2$.
Without loss of generality we assume, that~$y_2$ is not adjacent to~$p_1$. Then~$d_G(p_1)=4$, and the vertex~$p_1$ must be adjacent
to both vertices~$p_2$ and~$z_2$ (see figure~\ref{fig7}a).  

Note, that the vertex~$z_2$ cannot be adjacent to~$p_2$. (Otherwise~$z_2$ would be adjacent to three vertices of the set~$W\setminus\{x,y_1,y_2,z_1,z_2\}$: they are~$p_1$, $p_2$ and a vertex from~$W_3$. In this case we adjoin these three vertices to~$z_2$
i.e.  perform a step~4.4 with~$z_2$ instead of~$z_1$.) Thus the vertex~$p_2$ is adjacent to~$y_2$ and~$d_G(p_2)=4$ (see figure~\ref{fig7}b).

Moreover,~$y_2$ is adjacent to exactly two vertices of the set~$W_1=W\setminus \{x,y_1,y_2\}$ by Remark~\ref{rm33}. Since~$y_2$ is adjacent to~$p_2$ and to a vertex from~$W_3$, it is  adjacent to neither~$z_1$ nor~$z_2$. Hence~$z_1$ is adjacent to~$z_2$  and~$d_G(z_1)=d_G(z_2)=4$ (see figure~\ref{fig7}c).

\begin{figure}[!hb]
	\centering
		\includegraphics[width=\columnwidth, keepaspectratio]{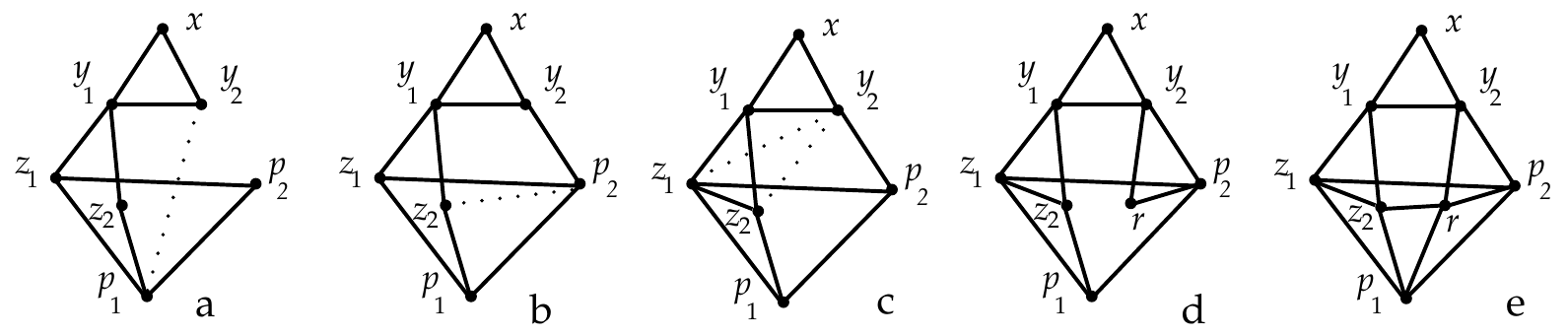}
     \caption{Step  4.5.5}
	\label{fig7}
\end{figure}

Denote by~$r$ the only vertex of the set~$W_3$, which is adjacent to~$y_2$.
Now one can apply to~$y_2$ the same  reasoning  as written above for~$y_1$ and obtain, that two adjacent to~$y_2$ vertices~$r$ and~$p_2$
are adjacent to each other and, in addition,~$d_G(r)=4$ (see figure~\ref{fig7}d). 

Continuing this reasoning for the vertex~$p_2$  and two vertices~$p_1$ and~$z_1$, adjacent to~$p_2$, we make sure that one of the vertices~$p_1$ and~$z_1$ must be adjacent to~$r$.  Since~$z_1$ cannot be adjacent to~$r$, then the vertices~$p_1$ and~$r$ are adjacent. 

Now it is clear (see figure~\ref{fig7}d), that~$z_2$ is adjacent to exactly two vertices of the set~$W_2$: these vertices are~$p_1$ and a vertex~$r'\in W_3$.
In this case one can repeat the reasoning written above for the vertex~$z_2$ instead of~$z_1$ and obtain, that $p_1$ ia adjacent to~$r'$. Hence,~$r=r'$ and~$z_2$ is adjacent to~$r$. We obtain the configuration, shown on figure~\ref{fig7}e.

We add to the tree the vertex~$r$ (adjoin it to any vertex adjacent to~$r$). Note, that now no added vertex is adjacent to a vertex outside the constructed tree. Let us calculate the parameters of this step from the very beginning: $\Delta t=5$,
$$\Delta u=2, \quad  \Delta b=4, \quad p(4.5.5) \ge 2\cdot {13\over 15} +4\cdot {2\over 15} - 5\cdot {2\over 5}= {4\over 15}.$$

\begin{rem}
\label{rm354}
1) We have proved that we can  perform after each of the steps~$M$ and~$N$ a step  with  profit 
at least~$3\over 15$. We shall always perform after steps~$M$ and $N$ one of the steps introduced above  to obtain a resulting step with non-negative profit. The notaition~$M4.2$ will mean a step, consisting of~$M$ and~$4.2$ after it. Similarly for  other steps.
We call such step by {\it $M$-step}, if the first step was~$M$ and {\it $N$-step} if the first step was~$N$. 
We call all these steps by {\it $MN$-steps}.

The profit of the steps~$M4.3$ and~$M4.5.4$ can be equal to zero, for all other $MN$-steps the profit is at least~${1\over 15}$. 
Any $MN$-step, except~$M4.5.5$ and~$N4.5.5$, cannot be the last step of the algorithm, since on this step we add at least one alive leaf.

2) Let us summarize the analyzed cases.  In remaining cases any vertex of level~1 is adjacent to at most one vertex of the set~$W$ 
(otherwise we can perform the step~$A3$ or one of $MN$-steps).
\end{rem}

In the next cases the number of leaves of the constructed tree does not vary, but the number of dead leaves increases.

\smallskip
\q{Z1}. { \it There exists a vertex of level~$1$, which is not adjacent to~$W$.}

\noindent Let it be a vertex~$w$. Then~$\N_G(w)=P(w)$. We add the vertex~$w$ to the tree. The vertex~$w$ and all vertices of the set~$\N_G(w)$, except one, become dead leaves of the new tree.  Therefore~$\Delta b = d_G(w)$. 

\q{Z1.1}.  $w\in T$.

\noindent  In this case the number of dead leaves increases by~$d_G(w) \ge 4$. We  consider that $\Delta b=4$, and if really $d_G(w)>4$
we record this as~$d_G(w)-4$ additional steps~$Z0$. Thus for the step~$Z1.1$ we have
$$\Delta u=0,\quad \Delta b= 4, \quad p(Z1.1)=4\cdot {2\over 15} - {2\over 5}= {2\over 15}.$$

\q{Z1.2}.  $w\in S$. 

\noindent In this case the parameters of the step are
$$\Delta u=0, \quad \Delta b= 3, \quad p(Z1.2)=3\cdot {2\over 15} - {1\over 5}= {3\over 15}.$$

\goodbreak
\q{Z1.3}.  $w\not\in S\cup T$. 

\noindent  In this case we have~$p(Z1.3)=\Delta b \cdot {2\over 15}$. We will not consider this step further on, since the parameters of this step are the same as the parameters of~$\Delta b$ consecutive steps~$Z0$.

\smallskip
\q{Z2}. { \it There are two adjacent vertices~$v,w$ of level~$1$.}

\noindent Let~$v,w$ be these vertices.  By remark~\ref{rm354} other vertices, adjacent to~$\{v,w\}$ are leaves of the tree~$F$.  Clearly, $d_G(v)\ge 2$ and~$d_G(w)\ge 2$. Let~$d_G(v)=2$ and~$\N_G(v)=\{x,w\}$. Then the~$x$, adjacent to~$w$ (otherwise we would apply reduction rule~$R1$). Hence we have~$d_{G,W}(x)\ge 2$ for a leaf~$x$ of the tree~$F$, that contradicts Remark~\ref{rl1}.

Thus, we have~$v,w\in S\cup T$.  The case~$v,w\in S$ is impossible (in this case we would apply reduction rule~$R2$).
We add the vertices~$v$ and~$w$ to the tree. In the obtained tree the vertices~$v$, $w$  and all vertices of~$P(v)\cup P(w)$, except 
two vertices, are dead leaves. Therefore,~$\Delta b = d_G(w)+d_G(v)-2$. 
As in step~$Z1.1$, further on we shall write minimal possible~$\Delta b$, and use, if it is necessary, steps~$Z0$.

\q{Z2.1}. {\it If one of vertices~$v,w$ belongs to~$S$ and another belongs to~$T$}, then
$$\Delta u=0, \quad \Delta b=5, \quad  p(Z2.1)=5 \cdot {2\over 15} -{1\over 5}-{2\over 5}= {1\over 15}.$$

\q{Z2.2}. {\it If $v,w\in T$}, then
$$\Delta u=0, \quad \Delta b=6, \quad p(Z2.2)=6\cdot {2\over 15} -2\cdot {2\over 5}= 0.$$

\smallskip
\q{Z3}. { \it There is a vertex~$w$ of level $1$ adjacent to a vertex~$v\in W \setminus  (S\cup T)$.}

\noindent We add the vertices~$w$  and~$v$ to the tree.  If $d_G(v)=2$ then $\N_G(v)\subset \N_G(w)$ (otherwise we would apply reduction rule~$R1$). Hence, in the obtained tree the vertex~$v$  and all vertices of~$P(w)$, except 
one, are dead leaves. We have $\Delta b = d_G(w)-1$. 

\q{Z3.1}. {\it If $w\in S$ } we obtain
$$\Delta u=0, \quad \Delta b=2, \quad p(Z3.1)=2\cdot {2\over 15} - {1\over 5}= {1\over 15}.$$

\q{Z3.2}. {\it If  $w\in T$}  we obtain
$$\Delta u=0, \quad \Delta b=3, \quad p(Z3.2)= 3\cdot {2\over 15} - {2\over 5} =  0.$$

\begin{lem}
 \label{lrz3}
Let~$w$ be a vertex of level~$1$. Then~$w\in T$, moreover, the vertex~$w$ is adjacent to  a vertex~$v\in S\cup T$ of level~$2$ and to at least three leaves of the tree~$F$.

\begin{proof}
By remark~\ref{rm354} we have~$d_{G,W}(w)\le 1$. Since we cannot perform the step~$Z1$, then~$d_{G,W}(w)= 1$, i.e.~$w$ is adjacent to a vertex~$v\in W$.

Since we cannot perform the step~$Z2$, then~$v$ is a vertex of level~2. Since we cannot perform the step~$Z3$, then~$v\in T\cup S$.
Since we cannot apply the reduction rule~$R1$, then~$w\in T\cup S$. 

Finally, let's prove, that~$w\in T$. Assume the contrary, let~${w\in S}$.  If~$v\in T$, we can perform a step~$A4$.
In the case~$v\in S$ we can apply the reduction rule~$R2$.  In both cases we have a contradiction.

Thus,~$w\in T$. Since~$d_{G,W}(w)= 1$, the vertex~$w$ is adjacent to at least three leaves of the tree~$F$.
 \end{proof}
\end{lem}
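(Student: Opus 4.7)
The plan is a contradiction argument: I will assume that $w$ violates some conclusion of the lemma and show that in each case one of the previously analyzed steps (or reduction rules) must have been available, contradicting the fact that we are looking at $w$ after all such steps were exhausted.

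First I would narrow $d_{G,W}(w)$. By Remark~\ref{rm354}, every level-$1$ vertex is adjacent to at most one vertex of $W$, so $d_{G,W}(w)\le 1$. If $d_{G,W}(w)=0$, then $\N_G(w)\subset V(F)$ and step~$Z1$ could fire. Hence $d_{G,W}(w)=1$, with a unique neighbor $v\in W$. Next I would pin down $v$: since step~$Z2$ is unavailable, $v$ cannot itself be adjacent to $V(F)$, so $v$ is a vertex of level~$2$; and since step~$Z3$ is unavailable, $v\in S\cup T$.

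Then I would rule out the possibility that $w\notin T$. The reduction rules together with Remark~\ref{r1} force $w$ to have degree at least~$3$ (a degree-$2$ vertex $w$ would form a triangle with its two neighbors, making $v$ adjacent to $V(F)$ and contradicting that $v$ is at level~$2$), so $w\in S\cup T$. Suppose $w\in S$. If $v\in T$, then $w$ satisfies the hypothesis of step~$A4$ exactly: a level-$1$ vertex of~$S$ whose unique $W$-neighbor is a level-$2$ vertex of~$T$. If instead $v\in S$, then $w$ and $v$ are adjacent vertices of~$S$ whose other neighbors lie, respectively, in $V(F)$ and in $W$, so their neighborhoods are disjoint and reduction rule~$R2$ applies. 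Both outcomes yield a contradiction. Hence $w\in T$, i.e.\ $d_G(w)\ge 4$, and combining with $d_{G,W}(w)=1$ gives at least three neighbors of $w$ in $V(F)$; Remark~\ref{rl1} forces each such neighbor to be a leaf of~$F$.

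The main obstacle is really bookkeeping rather than anything conceptual: one must match each possible failure of the lemma's conclusion to exactly the right previously-defined step and verify that its hypotheses are genuinely met, in particular confirming the type and level conditions in step~$A4$ and the disjoint-neighborhood condition in~$R2$, and being careful that the ``$w$ has degree~$2$'' subcase is excluded using Remark~\ref{r1} together with the level information we have just derived for $v$.
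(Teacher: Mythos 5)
Your proof follows essentially the same route as the paper's: rule out $d_{G,W}(w)\ne 1$ via Remark~\ref{rm354} and step~$Z1$, pin $v$ to level~2 and $S\cup T$ via $Z2$ and $Z3$, exclude $d_G(w)\le 2$, and dispose of $w\in S$ via step~$A4$ (if $v\in T$) or rule~$R2$ (if $v\in S$). Your treatment of the degree-2 exclusion is in fact slightly more explicit than the paper's terse appeal to ``cannot apply $R1$'', since you also note that if the two neighbors of a degree-2 $w$ were adjacent (the case $R1$ does not catch, but Remark~\ref{r1} does), then $v$ would be at level~1, a contradiction the paper leaves implicit.
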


\begin{figure}[!hb]
	\centering
		\includegraphics[width=1\columnwidth, keepaspectratio]{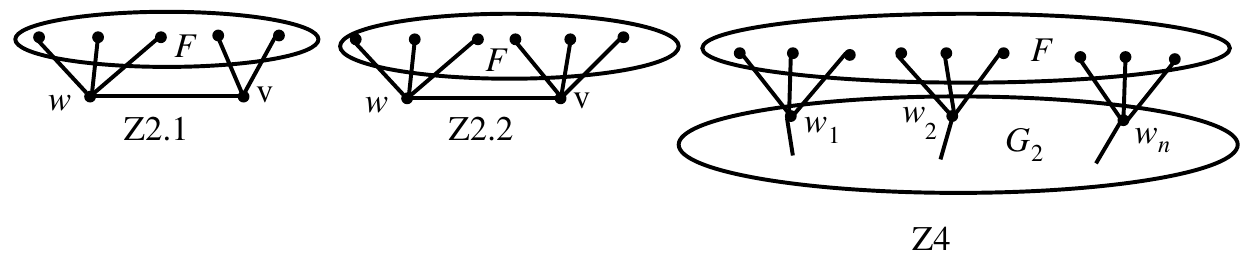}
     \caption{Steps of type $Z$.}
	\label{fig8}
\end{figure} 

\smallskip
\q{Z4}. { \it There exists a vertex in~$W$, i.e.~$F$ is not a spanning tree.}

\noindent Let~$w_1,\dots, w_n$ are all vertices of level~1. By lemma~\ref{lrz3} each of these vertices is adjacent with at least three leaves   of the tree~$F$. Thus there are at least~$3n$ alive leaves in the tree~$F$, i.e., ${u(F)-b(F)\ge 3n}$ and 
$$u(F)=c_G(F)+\alpha'(F) +{2\over 15}(u(F)-b(F)) \ge c_G(F)+\alpha'(F)+ {2n\over 5}.\eqno(1)$$

We delete all edges, connecting~$w_1,\dots,w_n$ with the tree~$F$. As a result the graph~$G$ will be splitted into~$G_1=G(V(F))$ and~$G_2=G(W)$. Since~$c_G(w_i)-c_{G_2}(w_i)={2\over 5}$, then $$c(G_2)= c_G(W)-n\cdot {2\over 5}. \eqno(2)$$

Note, that the graph~$G_2$ can be disconnected, but each its connected component contains at least 4 vertices and there is a pendant vertex
among them (one of the vertices~$w_1, \dots, w_n$). Hence, every connected component of~$G_2$ is not a graph-exclusion and contains less vertices than the graph~$G$. Thus we can apply the statement of our theorem to any connected component~$H$ of the graph~$G_2$ and construct
a spanning tree~$F_H$ in~$H$ with~${u(F_H)\ge c(H)+2}$. Therefore we can construct a forest~$F'$ in the graph~$G_2$, which consists of~$k$ spanning trees of connected components of~$G_2$. Clearly,
$${u(F')\ge c(G_2)+2k}. \eqno(3)$$ 

We adjoin each of~$k$ connected components of the forest~$F'$ to the tree~$F$ and obtain the spanning tree~$T$ of the graph~$G$. Let us estimate~$u(T)$ with the help of inequalities~$(1)$, $(3)$ and~$(2)$:
$$u(T)=u(F)+u(F')-2k \ge c_G(F)+\alpha'(F)+ {2n\over 5}+c(G_2)=$$
$$= c_G(V(F))+c_G(W) +\alpha'(F) =c(G)+\alpha'(F).$$
Thus, in this case we have~$\alpha(G)\ge \alpha(T)\ge \alpha'(F)$.

\subsection{Beginning of the construction and estimation of~$\alpha$}

We shall begin construction of the spanning tree with a {\it base tree}~$F'$ such that~$\alpha'(F')$ is  rather big.

We consider several cases. We shall pass to the next variant of the base only when all 
previous variants are impossible. We begin with the cases when one can easily construct a base tree~$F'$ with~$\alpha'(F')\ge 2$
and, hence, finish the proof of the theorem.

\smallskip
\q{B1}. {\it There are two adjacent vertices~$a,a'\in T$ with~$\N_G(a)\cap \N_G(a')=\varnothing$.}

\noindent We begin with the base tree~$F'$ in which the vertices~$a$ and~$a'$ are adjacent to each other and to all vertices from~$\N_G(a)\cap \N_G(a')$. Clearly,~$u(F')=u\ge 6$, $c_G(F')\le {2\over 5}(u+2)$ and 
$\alpha'(F')\ge  {13\over 15} u - c_G(F') \ge {7u-12 \over 15} \ge 2. $

\smallskip
\q{B2}. {\it There is a vertex~$a\in T$, adjacent to a vertex of degree not more than~$2$.}

\noindent Let~$v\in \N_G(a)$, $d_G(v)\le 2$. We begin with the base tree~$F'$, in which the vertex~$a$ is adjacent to all vertices from~$\N_G(a)$. In the case~$d_G(v)=1$ it is clear, that~$v$ is a dead leaf of~$F'$. Let~$d_G(v)=2$. Since we cannot apply the reduction rule~$R1$, then the vertices~$a$,~$v$ and some vertex from~$\N_G(a)$  form a triangle. Hence, in this case the vertex~$v$ is a dead leaf of~$F'$, too.

Therefore,  $u(F')=d_G(a)=u \ge 4$, $b(F')\ge 1$,  $c_G(F')\le {2\over 5}u$ and 
$$\alpha'(F') \ge  {13\over 15} u+ {2\over 15} -  c_G(F') \ge  {7u+2 \over 15} \ge 2. $$

\smallskip
{\it Further on we shall consider base trees~$F'$ with~$\alpha'(F')<2$. To provide~$\alpha(G)\ge 2$ we draw attention to the end of construction.}

\begin{lem}
\label{lf115}
Assume, that the graph~$G$ does not contain configurations, descripted in the cases~$B1$ and~$B2$ and one have constructed a spanning tree
with the help of our algorithm.   Then the following statements hold.

$1)$  If the step~$Z4$ was performed, then there exists a  base tree~$F'$  with ${\alpha'(F')\ge 2}$.

$2)$  If the step~$Z4$ was not performed, then  the last step of the algorithm do not add new alive leaves and has  profit at least~${1\over 15}$.

\begin{proof}
1) Let us return to the step~$Z4$ and to the graph~$G_2$, cut from the tree~$F$ (see figure~\ref{fig8}). We consider a connected component~$G'$ of the graph~$G_2$. Without loss of generality assume, that~$w_1,\dots,w_k \in V(G')$, $w_{k+1},\dots,w_n\notin V(G')$. Since~$G'$ is a smaller connected graph with pendant vertices, one can construct in~$G'$ a spanning tree~$T'$ with~$\alpha(T')=u(T')-c_{G'}(T')\ge 2$. 

Consider the tree~$T'$ as a subgraph of the graph~$G$. Unfortunately, each of the vertices~$w_1,\dots,w_k$ costs ${2\over 5}$ in~$G$
(while it costs~0 in~$G'$), hence, $c_G(T')=c_{G'}(T')+{2k\over 5}$. In addition, the vertices~$w_1,\dots,w_k$ are alive  leaves of~$T'$ in the graph~$G$ (all other leaves of the tree~$T'$ are, clearly, dead), hence, in the graph~$G$ we have $u(T')-b(T') = k$ and
$$\alpha'(T') = u(T')-c_G(T') -{2\over 15}\cdot (u(T')-b(T'))= u(T')-c_{G'}(T')-{8k\over 15} = 2-{8k\over 15}.$$

Lel us remember  details of the step~$Z4$ and for all~$i\in\{1,\dots, k\}$ consider three adjacent to~$w_i$ vertices $x^i_1,x^i_2,x^i_3\in V(F)$. All~$3k$ vertices in such triples are different and do not belong to the tree~$T'$. 
For each~$i\in \{1,\dots, k\}$ we adjoin to~$w_i$ three vertices~$x^i_1,x^i_2,x^i_3$. That is we~$k$ times perform  a step~$A2$ and  a step~$A1$.  The sum of profits is equal to~$k\cdot {8\over 15}$ and we obtain as a result the base tree~$F'$ with~$\alpha'(F')\ge  2$.

2) Consider the last step. No new alive leaves were added on this step. Looking over  parameters of the steps, one can make a conclusion, that
only one of the steps~$Z0$, $Z1.1$, $Z1.2$,  $Z2.1$, $Z2.2$,  $Z3.1$, $Z3.2$, $N4.5.5$ and~$M4.5.5$ can be the last. 
The step~$Z2.2$ is impossible,  since it needs the configuration from the case~$B1$.
The step~$Z3.2$ is impossible,  since it needs the configuration from the case~$B2$.
Each of other steps has profit at least~${1\over 15}$.
\end{proof}
\end{lem}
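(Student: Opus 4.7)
The plan is to handle the two parts separately. For part 1, the strategy is to apply the inductive hypothesis to a connected component of the graph $G_2$ cut off in step $Z4$, then reattach it via favorable $A1, A2$ steps to build the base tree. For part 2, the strategy is to enumerate the steps of the algorithm that could possibly be the last one (those with $\Delta u = 0$), and rule out the ones with non-positive profit using the hypotheses $B1$ and $B2$.

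For part 1, I would fix a connected component $G'$ of $G_2$ and let $k$ be the number of $w_i$ lying in $V(G')$. The key observation enabling induction is that $G'$ is strictly smaller than $G$ and cannot be any of the three exclusions $C_6^2$, $C_8^2$, $G_8$, since those are $4$-regular whereas each $w_i$ is a pendant vertex of $G_2$. Thus the inductive hypothesis yields a spanning tree $T'$ of $G'$ with $u(T') \ge c_{G'}(T') + 2$. The delicate accounting step comes next: viewed inside $G$, each $w_i \in V(T')$ now costs ${2\over 5}$ rather than $0$, so $c_G(T') = c_{G'}(T') + {2k\over 5}$, and the $w_i$ are alive leaves of $T'$ when considered in $G$. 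Plugging into the definition of $\alpha'$ gives $\alpha'(T') = 2 - {8k\over 15}$. I would then reattach each $w_i$ by adjoining its three neighbors $x^i_1, x^i_2, x^i_3 \in V(F)$ --- distinct across all $k$ attachments, since they originally lay in $F$ --- performing one step $A2$ and one step $A1$ per $w_i$, for total added profit $k\cdot {8\over 15}$. The two contributions cancel, and the resulting base tree $F'$ satisfies $\alpha'(F') \ge 2$. The main obstacle here is tracking the distinctness of the $x^i_j$, the tree-extension validity of each $A1, A2$ application, and the shifts in cost and alive/dead status of the $w_i$ between $G'$ and $G$; once set up, the algebra collapses to the cancellation $2 - {8k\over 15} + k\cdot {8\over 15} = 2$.

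For part 2, I would first observe that only a step with $\Delta u = 0$ can be last, since otherwise an alive leaf remains and the algorithm must continue. Scanning the profits recorded earlier, these candidates are exactly $Z0$, $Z1.1$, $Z1.2$, $Z2.1$, $Z2.2$, $Z3.1$, $Z3.2$, together with $N4.5.5$ and $M4.5.5$ (in case $4.5.5$ the added subgraph has no external neighbors, so the algorithm can stop there). Step $Z2.2$ requires two adjacent vertices of $T$ with disjoint neighborhoods, which is exactly configuration $B1$ and thus excluded; step $Z3.2$ requires a vertex of $T$ adjacent to a vertex outside $S \cup T$ (hence of degree at most $2$), which is configuration $B2$ and thus also excluded. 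For each of the surviving candidates the profit was computed earlier to be at least ${1\over 15}$, which gives the desired conclusion.
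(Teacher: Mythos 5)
Your proof matches the paper's in both structure and calculations for parts~1 and~2, down to the same accounting of $\alpha'(T')=2-\frac{8k}{15}$ and the same case analysis of possible final steps. One small correction in part~1: the distinctness of the vertices $x^i_j$ across different $i$ does not follow merely from their lying in $V(F)$ (that only shows they are outside $T'$); it follows from Remark~\ref{rl1}, which is in force at step~$Z4$ because step~$A2$ is unavailable and guarantees that no leaf of~$F$ is adjacent to more than one vertex of~$W$, so the sets $\N_G(w_i)\cap V(F)$ are pairwise disjoint.
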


Thus hereafter it is enough to prove, that on the steps which add new alive leaves  a tree~$F$ with~$\alpha'(F)\ge {29\over 15}$ will be constructed. Let us continue the case analysis.

\smallskip
\q{B3}. {\it There is a vertex~$a$ of degree at least~$5$.}

\noindent  We begin with base the tree~$F'$, in which the vertex~$a$ is adjacent to all vertices from~$\N_G(a)$. Clearly,  $u(F')=d_G(a)=u \ge 5$,  $c_G(F')\le {2\over 5}(u+1)$ and $\alpha'(F')\ge {13\over 15}u -  c_G(F') \ge  {7u-6 \over 15} \ge {29\over 15}, $
that is enough.

\smallskip
\q{B4}. {\it There is a vertex~$x\in S$, adjacent to a vertex of degree not more than~$2$.}

\noindent  Let $v\in \N_G(x)$, $d_G(v)\le 2$, $\N_G(x)=\{v,y_1,y_2\}$. 
We begin with the  base tree~$F'$, in which the vertex~$x$ is adjacent to all vertices from~$\N_G(x)$.
Similarly to the case~$B2$, the vertex~$v$ is a dead leaf of~$F'$. Hence,  $u(F')=3$, $b(F')\ge 1$. 
We have $$\alpha'(F')\ge  {13\over 15}\cdot 3  +{2\over 15} - c_G(F') = {41 \over 15} - c_G(F').$$

If at least one of the vertices~$y_1,y_2$ does not belong to~$T$, then $c_G(F')\le 2\cdot {1\over 5}+  {2\over 5}={4\over5}$ and
$\alpha'(F')\ge {29 \over 15}. $  By lemma~\ref{lf115} we have~$\alpha(G)\ge 2$.

Consider the case~$y_1,y_2\in T$. Then both~$y_1,y_2$ are alive leaves,  $c_G(F')={1\over 5}+ 2\cdot {2\over 5}=1$  
and~$\alpha'(F')=  {26 \over 15}$. The construction is not finished, we need additional profit~${4\over 15}$. 

During analysis of the cases~$M$ and $N$ we have  cosidered a similar problem of deficiency of profit~${3\over 15}$. We repeat the same reasoning, perform a~$MN$-step which is possible in our configuration and obtain a tree~$F^*$ with~$\alpha'(F^*)\ge{29\over 15}$.
Moreover, we have $\alpha'(F^*)<2$ only for steps~$M4.3$ and~$M4.5.4$, but in both these cases the constructed trees have alive leaves. Hence,
the construction is not finished and by lemma~\ref{lf115} the last step will give us the profit at least~$1\over 15$ and 
provide~$\alpha(G)\ge 2$.

\begin{rem}
 In the items~$B2$ and~$B4$ we considered all cases when the graph contains a vertex of degree not more than~2.
In the item~$B3$ the case when the graph contains a vertex of degree more than~4 was considered.
{\it Hence further on we consider only  graphs with vertex degrees equal to~$3$ or~$4$.}
\end{rem}

\smallskip
{\it In table~$1$ we introduce parameters of all possible steps.}  The profits of all steps are multiplied by~15.

\smallskip
We have taken into account that the steps~$Z2.2$,  $Z3.1$, $Z3.2$  and~$Z4$  are impossible.
(For steps~$Z2.2$,  $Z3.2$ and~$Z4$  see details in Lemma~\ref{lf115} and its proof. The step~$Z3.1$ is impossible, since vertex degrees of our graph are at least~3.)

It is not convinient to deal with such a great number of steps. Next lemma will significantly decrease the number of possible steps.

\begin{lem}
\label {lmb} 
Assume, that  one have constructed a spanning tree in the graph~$G$ with the help of our algorithm.  
If it was performed one of $MN$-steps, mentioned below, then~$\alpha(G)\ge 2$.

$1)$ One of the  steps~$N4.2$, $N4.3$, $N4.4$, $N4.5.2$, $N4.5.3$, $N4.5.5$. One of the steps~$N1$, $N2$,  
$N4.5.4$, on which all added vertices, except~$x$,  were not adjacent to the tree~$F$.

$2)$  One of the  steps~$M4.2$, $M4.3$, $M4.4$, $M4.5.2$, $M4.5.3$, $M4.5.5$.  One of the  steps~$M1$, $M2$,  
$M4.5.4$, on which all added vertices, except~$x$,  were not adjacent to the tree~$F$. 

\end{lem}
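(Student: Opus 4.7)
The plan is to track the algorithm's accumulated profit and show it reaches at least~$2$. Starting from a base tree $F_0$ with $\alpha'(F_0) = \alpha_0$, the final spanning tree satisfies $\alpha'(F_{\mathrm{final}}) = \alpha_0 + \sum_i p(A_i)$, where the sum is over all steps executed. Every step of the algorithm has non-negative profit by construction, and by Lemma~\ref{lf115} either step~$Z4$ was performed (in which case $\alpha(G) \ge 2$ is immediate) or the final step contributes profit at least $1/15$. Hence the lemma reduces to verifying $\alpha_0 + p_{MN} + 1/15 \ge 2$, where $p_{MN}$ is the profit of the listed $MN$-step.

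First I would compile the profits of each listed step from the case analysis already performed. Using $p(M) = -3/15$ and $p(N) = -2/15$ together with the bounds on $p(1), p(2), p(4.2), \ldots, p(4.5.5)$, every listed $N$-step has profit at least $1/15$ (with $p(N1) \ge 4/15$, $p(N2) \ge 6/15$, $p(N4.4) \ge 7/15$, and so on), and every listed $M$-step also has profit at least $1/15$, since the two zero-profit cases $M4.3$ and $M4.5.4$ are explicitly excluded. The proviso in the lemma that all vertices added (other than $x$) be non-adjacent to~$F$ in the three types $N1$, $N2$, $N4.5.4$ and their $M$-analogues ensures that the profit bound is applied in its stated form; if any added vertex were adjacent to $F$, extra dead leaves would arise and the profit would only grow, so the lemma's conclusion still holds in that situation.

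Second, I would combine the $MN$-step profit with the base tree's contribution. Cases $B1$--$B4$ yield $\alpha_0 \ge 29/15$ whenever they apply, either directly (as in $B3$) or after an auxiliary $MN$-step absorbed into the total (as in $B4$). In the further base-tree cases to be introduced after the lemma, the same lower bound $\alpha_0 \ge 28/15$ will be established by analogous constructions. Combining everything,
$\alpha'(F_{\mathrm{final}}) \ge 28/15 + 1/15 + 1/15 = 30/15 = 2$,
so $\alpha(G) \ge 2$ as required. The main obstacle is the bookkeeping for the intricate subcases, in particular $M4.5.5$ and $N4.5.5$, whose rigid local configurations (Figure~\ref{fig7}e) force specific degrees and cycles; there the profit inequality $p_{MN} \ge 1/15$ established in the case analysis already suffices once combined with the base-tree and final-step contributions.
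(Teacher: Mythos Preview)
Your approach has a genuine gap that makes it fail. You assert that ``the two zero-profit cases $M4.3$ and $M4.5.4$ are explicitly excluded,'' but they are not: $M4.3$ appears verbatim in the first list of part~(2), and $M4.5.4$ appears in the second list. Since $p(M4.3)=0$ and $p(M4.5.4)=0$, your claimed lower bound $p_{MN}\ge 1/15$ is false for these steps. You also assert that in the base-tree cases introduced after the lemma one will have $\alpha_0\ge 28/15$; this is not so. In case~$B7$ the base tree satisfies only $\alpha'(F')\ge 22/15$. Plugging $\alpha_0=22/15$ and $p_{MN}=0$ into your inequality gives $22/15+0+1/15=23/15<2$, so the argument collapses precisely in the cases where the lemma is needed most.

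The paper's proof uses a completely different idea, which you are missing: it \emph{discards the original base tree} and builds a new one from the subtree $F_0$ that was adjoined during the $MN$-step. The hypothesis that the added vertices (other than~$x$) are not adjacent to~$F$ guarantees that $F_0$ interacts with the rest of~$G$ only through~$x$, so one can compute $\alpha'(F_0)$ directly: it equals $p+{13\over15}$ for $N$-steps and $p+{11\over15}$ for $M$-steps. One then enlarges $F_0$ by adjoining the leaf (or leaves) of~$F$ adjacent to~$x$ together with \emph{their} neighbours in~$F$; these adjunctions are steps of type~$A1$/$A2$ and supply enough additional profit to push $\alpha'$ of the new base tree up to~$29/15$ or beyond. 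For the $M$-steps with $p\le 2/15$ (in particular $M4.3$ and $M4.5.4$) a further delicate case analysis, counting how many alive leaves the new base tree has and how much profit their eventual killing must yield, is required. None of this is captured by simply summing profits from the original base tree.
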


\begin{proof} Let~$F$ be a tree, constructed before the mentioned $MN$-step. Remember detail of $MN$-steps: on this step we adjoin to~$F$ a tree~$F_0$ which root is a  vertex~$x\in S\cup T$ of level~1. Let~$p$ be the profit of this step. Note, that in all mentioned steps the vertex~$x$ is adjacent to exactly two vertices of the set~$W=V(G)\setminus V(F)$, and all vertices of the adjoint tree~$F_0$, except~$x$, 
are not adjacent to~$V(F)$. To make sure of these facts one can look over details of mentioned steps and take into account the conditions of lemma.

\medskip

{
\begin{tabular}{|@{\quad}c@{\quad}|@{\quad}c@{\quad}|@{\quad}c@{\quad}|}
\hline
Step &   $\Delta u-\Delta b$ &  $15\cdot$profit  \\ [2pt]
\hline
$A1$ &   1 &  $7$  \\
\hline
$A2$, \quad $A4$, \quad $M4.2$, \quad $N4.3$, \quad $M4.5.3$  &   1 & 1  \\
\hline
$A3$,\quad $N4.2$, \quad $M4.5.2$, \quad $N4.5.3$   &   2 &  2  \\
\hline
$M1$, \quad $M4.1.2$, \quad $M4.5.1$, \quad $N4.1.1$  &   0 &  3  \\
\hline
$N1$, \quad $N4.1.2$, \quad $N4.5.1$     &   1 &  4  \\
\hline
$M2$   &   2 &  5  \\
\hline
$N2$, \quad $M4.4$  &   3 &  6  \\
\hline
$M3.1$   &   $-4$ &  $5$  \\
\hline
$N3.1$   &   $-3$ &  $6$  \\
\hline
$M3.2$   &   $-2$ &  $4$  \\
\hline
$N3.2$   &   $-1$ &  $5$  \\
\hline
$M4.1.1$, \quad $N4.5.5$,\quad  $Z0$  &   $-1$ &  $2$  \\
\hline
$M4.3$  &   0 &  0  \\
\hline
$N4.4$  &   4 &  7  \\
\hline
$N4.5.2$  &   3 &  3  \\
\hline
$M4.5.4$  &   3 &  0  \\
\hline
$N4.5.4$  &   4 &  1  \\
\hline
$M4.5.5$  &   $-2$ &  $1$  \\
\hline
$Z1.1$   &   $-4$  &  $2$  \\
\hline
$Z1.2$  &   $-3$  &  $3$  \\
\hline
$Z2.1$    &   $-5$  &  $1$  \\
\hline
\end{tabular}
}

\medskip

\centerline{Table 1.}

1) For  $N$-steps we have~$x\in S$ and~$p\ge {1\over 15}$. Let us remind the calculation of profit of this step. The vertex~$x$ is adjacent to an alive leaf~$a$ of the tree~$F$. The  vertex~$a$ is not a leaf of the tree, obtained after adjoining~$F_0$ to~$F$, due to this the profit was  decreased by~${13\over 15}$. New dead leaves did not appear among the vertices of the tree~$F$, hence,  new leaves and new dead leaves of the obtained tree are leaves and dead leaves of the tree~$F_0$, calculated with the same  coefficients as in calculation of~$\alpha'(F_0)$.
Therefore, $\alpha'(F_0)=p+{13\over 15}\ge {14\over 15}$.

Clearly, $\N_G(a)\cap \N_G(x) =\varnothing$, hence by Remark~\ref{r1} we have~$a\in T$. 
Thus~$a$ is adjacent with three vertices~$b_1,b_2,b_3\in V(F)$, these vertices do not belong to the tree~$F_0$. 
We adjoin to~$F_0$ vertices~$a,b_1,b_2,b_3$ (see figure~\ref{fig9},\,1) and obtain as a result a new base tree~$F_1$. Performed operation gives us the profit~$3\cdot {13\over 15} - 4\cdot {2\over 5}= 1$, hence,~$\alpha'(F_1)\ge {29\over 15}+p\ge 2$ and~$\alpha(G)\ge2$.

\smallskip \goodbreak
2) {\it General algorithm of construction of a base tree for $M$-steps.}

\noindent In this case~$x\in T$.  
Let us remind the calculation of  profit of this step. The vertex~$x$ is adjacent to two alive leaves~$a_1,a_2$ of the tree~$F$. 
One of the vertices~$a_1,a_2$ is not a leaf of the tree, obtained after adjoining~$F_0$ to~$F$ (due to this the profit was  decreased by~${13\over 15}$), another becomes dead leaf (due to this the profit was  increased by~${2\over 15}$). All other new  leaves and new dead leaves of the obtained tree are leaves and dead leaves of the tree~$F_0$, calculated with the same  coefficients as in calculation of~$\alpha'(F_0)$. Therefore, $\alpha'(F_0)=p+{11\over 15}$.

We adjoin to~$F_0$ vertices~$a_1,a_2\in V(F)$  and obtain as a result a new base tree~$F_1$. Performed operation gives us  profit~$2\cdot ({13\over 15} -{2\over 5})={14\over 15}$, hence,~$\alpha'(F_1) \ge {25\over 15}+p$. 
If~$a_1,a_2\not \in T$, then profit increases by at least~$2\cdot{1\over 5}$ and we obtain~$\alpha'(F_1)\ge {31\over 15}$, that is enough.

Let~$a_1\in T$, then~$d_G(a_1)=4$. Note, that~$a_1$ is a leaf of the tree~$F$, hence, it is not adjacent to vertices of the set~$W$, except~$x$, thus it is adjacent to at least two different from~$a_2$ vertices of the tree~$F$. These vertices do not belong to the tree~$F_1$, we adjoin them to the tree and obtain a new tree~$F_2$.  If we have added more than two vertices, we have profit at least~$p(A2)+p(A1)\ge {8\over 15}$ (adjoining of two vertices is a step~$A2$, adjoining of the third vertex is a step~$A1$) 
and~$\alpha'(F_2)>2$. That is enough.
 
The only remaining case is when we have added exactly two vertices, let it be~$b_1,b_2$. Note, that in this case the vertices~$a_1$ and~$a_2$
are adjacent. We have performed a step~$A2$ with profit at least~${1\over 15}$, thus,  $\alpha'(F_2)\ge {26\over 15}+p$. 
If~$p\ge {3\over 15}$ we have~$\alpha'(F_2)\ge {29\over 15}$, that is enough by lemma~\ref{lf115}. For steps with~$p\le {3\over 15}$ we
consider two cases: $a_2$ is a dead  leaf  (see figure~\ref{fig9},\,2a) and an  alive leaf of the tree~$F_2$, respectively.

\begin{figure}[!ht]
	\centering
		\includegraphics[width=1\columnwidth, keepaspectratio]{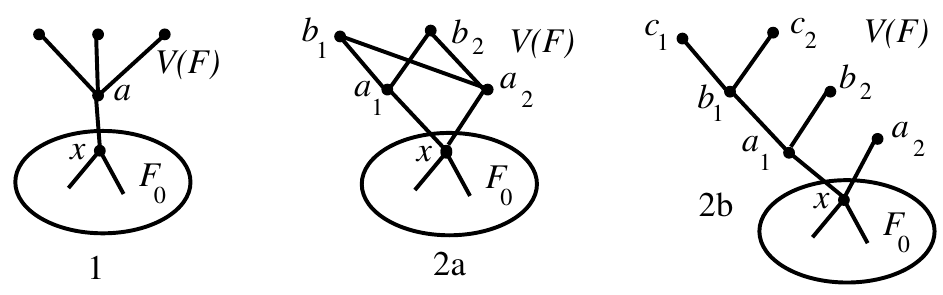}
     \caption{Construction of a base tree.}
	\label{fig9}
\end{figure}

\smallskip \goodbreak
\q{a}. {\it The vertex $a_2$ is a dead leaf of the tree~$F_2$.} 

\noindent That increases the profit by~$2\over 15$ and provide~$\alpha'(F_2) \ge {28\over 15}+p$. Both~$b_1$, $b_2$ are alive  leaves of the tree~$F_2$, otherwise profit increases by at least~$2\over 15$ and we have~$\alpha'(F_2)\ge 2$. If~$p\ge {1\over 15}$,  by lemma~\ref{lf115} we have~$\alpha(G)\ge 2$. The only remaining steps are~$M4.5.4$ and $M4.3$ with profit~0.

 \goodbreak 
\q{a1}. {\it Step $M4.5.4$.}

\noindent
Consider further construction of a spanning tree with the help of our algorithm. The tree~$F_2$ has exactly~7 alive leaves, hence, in the process of construction at least  7 alive leaves become dead. Let's look over Table~1: any step, decreasing the number of alive leaves, 
has  profit at least~$1\over 15$. Moreover, we can decrease the number of alive leaves with  profit exactly~$1\over 15$ only by~2 or by~5, i.e. less than by~7. Thus for killing of~7 alive leaves we obtain profit at least~$2\over 15$ and provide~$\alpha(G)\ge 2$.

\q{a2}. {\it Step $M4.3$.}

\noindent
Consider further construction of a spanning tree with the help of our algorithm. The tree~$F_2$ has exactly~4 alive leaves, hence, in the process of construction at least  7 alive leaves become dead. We need additional profit  at least~$2\over 15$. 
Killing of  alive leaves always gives us profit at least~$1\over 15$. The only possible number of alive leaves, which killing will not provide the profit~$2\over 15$ is~5. But to kill 5 alive leaves we must at first increase their number by exactly~1, and this operation provides profit at least~$1\over 15$. In any case we obtain~$\alpha(G)\ge 2$.

\smallskip
\q{b}. {\it The vertex~$a_2$ is an alive leaf of the tree~$F_2$.} 

\noindent Since~$a_2$ is not adjacent to~$a_1$, in this case at least one of the vertices~$b_1,b_2$ (let it be~$b_1$)  is not adjacent to~$a_2$. If $b_1\not\in T$,  then due to this the profit increases by~$1\over 5$ and we obtain~$\alpha'(F_2)\ge {29\over 15}$.
By lemma~\ref{lf115} that is enough for~$\alpha(G) \ge 2$. 

Let~$b_1\in T$. The vertices of the tree~$F_0$ are not adjacent to~$b_1\in V(F)$,  hence,~$b_1$ can be adjacent to at most two vertices of~$V(F_2)$: the vertex~$a_1$ and, maybe, the vertex~$b_2$. Therefore,~$b_1$ is adjacent to at least two vertices which do not belong to the tree~$F_2$. We adjoin all these vertices to the tree  $F_2$ and obtain a tree~$F_3$.  
If we have added more than two vertices, then, clearly,~$\alpha'(F_3)>2$. Hence, we have added exactly two vertices, let it be~$c_1,c_2$  
(see figure~\ref{fig9},\,2b). We consider leaves~$b_2$, $c_1$, $c_2$ of the tree~$F_3$ to be alive. If any of them is a dead leaf, it will be calculated at the end of construction with the help of a step~$Z0$.

We have performed a step~$A2$, obtained  profit~$1\over 15$ and~$\alpha'(F_3)\ge {27\over 15}+p$. 
If~$p\ge {2\over 15}$, then we have~$\alpha'\ge {29\over 15}$, that by lemma~\ref{lf115} provides~$\alpha(G)\ge 2$. 
Only the steps with profit less than~$2\over 15$ remain:  $M4.5.4$, $M4.3$ (with profit~0) and
$M4.2$,   $M4.5.3$, $M4.5.5$ (with profit~$1\over 15$). 

\smallskip \goodbreak
\q{b1}. {\it Step $M4.5.4$.}

\noindent
In this case there are exactly~9 alive leaves in the tree~$F_3$ and~$\alpha'(F_3)\ge {27\over 15}$. 
Consider further construction of a spanning tree with the help of our algorithm. 
At one step we can kill~1, 2, 3, 4 or~5 alive leaves (see table~1).
Killing at one step of any number of alive leaves, except~2 and~5, gives us profit at least~$2\over 15$. 
Hence, the only number of killed alive leaves, which is at least~9 and does not provide the profit at least~$3\over 15$ is 10 (one can kill~10 alive leaves with profit~$2\over 15$). But to kill 10 alive leaves we must at first increase their number by exactly~1, and this operation provides profit at least~$1\over 15$. In any case we obtain~$\alpha(G)\ge 2$.

\begin{rem}
\label{r454}
Now the cases of steps~$M4.5.4$ and~$N4.5.4$ in lemma~\ref{lmb} are completely analyzed. 
Assume, that  one have constructed a spanning tree~$T$ with~$\alpha(T)<2$ in the graph~$G$ with the help of our algorithm and it was performed 
one of the steps~$M4.5.4$ and~$N4.5.4$. Let~$F$ be a tree, constructed before this step.

Then at least one of vertices, adjoined on considered step, must be adjacent to the tree~$F$. Let us remember the details of this step: a leaf, adjacent to the tree~$F$, must be among two  leaves, adjoint to~$p_1$ at the end of the step (otherwise we would perform one of previous steps, see figure~\ref{fig6}). We call this leaf by~$q$.

If~$q\in T$ (in this case we call the steps~$M4.5.4.1$ and~$N4.5.4.1$), then~$q$ must be adjacent to  two leaves of the tree~$F$ (by remark~\ref{rl1}), that increases the number of dead leaves by two. Thus we obtain
$$ p(M4.5.4.1)\ge p(M4.5.4)+ 2\cdot {2\over 15}\ge {4\over 15} , \quad \Delta u=4, \quad  \Delta b=3, $$
$$ p(N4.5.4.1)\ge p(N4.5.4)+ 2\cdot {2\over 15} \ge {5\over 15}, \quad \Delta u=4, \quad \Delta b=2.$$

If~$q\notin T$ (in this case we call the steps~$M4.5.4.2$ and~$N4.5.4.2$), then we have one extra dead leaf. In addition, the profit increases by at least~$1\over 5$, since the cost of the vertex~$q$ decreases. In this case 
$$p(M4.5.4.2)\ge p(M4.5.4)+  {2\over 15} +{1\over 5} \ge  {5\over 15},\quad \Delta u=4,\quad \Delta b=2,$$
$$p(N4.5.4.2)\ge p(N4.5.4)+  {2\over 15} +{1\over 5} \ge  {6\over 15},\quad \Delta u=4,\quad\Delta b=1.$$

{\it Now we can claim, that any step, which increases the number of alive leaves, has profit at least~$1\over 15$. }
We shall take into account this property in forthcoming reasonings.
\end{rem}

Let us continue the proof of lemma~\ref{lmb}.

\smallskip \goodbreak
\q{b2}. {\it Step~$M4.3$.}

\noindent
In this case there are exactly~6 alive leaves in the tree~$F_3$ and~$\alpha'(F_3)\ge {27\over 15}$. 
Consider further construction of a spanning tree with the help of our algorithm. 
Killing  of any number of more than 5 alive leaves gives us  profit at least~$2\over 15$. 
Moreover, killing of exactly 6 alive leaves gives us profit at least~$3\over 15$.
Any step, which increases the number of alive leaves, has profit at least~$1\over 15$.
In any case we obtain~$\alpha(G)\ge 2$. 
\smallskip \goodbreak

\q{b3}. {\it Steps~$M4.2$, $M4.5.3$, $M4.5.5$.}

\noindent
In these cases we have~$\alpha'(F_3)\ge {28\over 15}$. 
We have in the tree~$F_3$ exactly~4 alive leaves for the step~$M4.5.5$ and exactly~7 alive leaves in other cases.
Hence we must kill at least~4 alive leaves. 
The only number of alive leaves, which killing will not provide profit~$2\over 15$ is~5 (profit~$1\over 15$). It is possible only for the step~$M4.5.5$,
if we increase the number of alive vertices exactly by~1, but this operation has profit at least~$1\over 15$.
In any case we obtain~$\alpha(G)\ge 2$. 
\end{proof}

Now there are significant transformations in our table of steps. 
 We exclude steps that provide~$\alpha(G)\ge 2$ by lemma~\ref{lmb} and  take into account remark~\ref{r454}.
All remaining steps and their parameters are introduced in table~2.

\medskip
{
\begin{tabular}{|@{\quad}c@{\quad}|@{\quad}c@{\quad}|@{\quad}c@{\quad}|}
\hline
Step &   $\Delta u-\Delta b$ &  $15\cdot$profit  \\ [2pt]
\hline
$A1$ &   1 &  $7$  \\
\hline
$A2$, \quad $A4$ &   1 & 1  \\
\hline
$A3$   &   2 &  2  \\
\hline
$M1$, \quad $M4.1.2$, \quad $M4.5.1$, \quad $N4.1.1$  &   0 &  3  \\
\hline
$N1$, \quad $N4.1.2$, \quad $N4.5.1$,  \quad $M4.5.4.1$ &   1 &  4  \\
\hline
$M2$, \quad $N4.5.4.1$, \quad $M4.5.4.2$ &   2 &  5  \\
\hline
$N2$, \quad $N4.5.4.2$   &   3 &  6  \\
\hline
$M3.1$   &   $-4$ &  $5$  \\
\hline
$N3.1$   &   $-3$ &  $6$  \\
\hline
$M3.2$   &   $-2$ &  $4$  \\
\hline
$N3.2$   &   $-1$ &  $5$  \\
\hline
$M4.1.1$, \quad  $Z0$  &   $-1$ &  $2$  \\
\hline
$Z1.1$   &   $-4$  &  $2$  \\
\hline
$Z1.2$  &   $-3$  &  $3$  \\
\hline
$Z2.1$    &   $-5$  &  $1$  \\
\hline
\end{tabular}
}

\smallskip

\centerline{Table 2.}

\smallskip

\begin{rem}
\label{rb7} Looking over table~2, one cal easily conclude:

1) any step  gives us profit at least~$1\over 15$;

2) a step or several steps, increasing the number of alive leaves by~2 or by~5, gives us profit at least~$2\over 15$;

3) any step, which preserves the number of alive leaves, gives us profit at least~$3\over 15$.
\end{rem}

Let us continue case analysis in construction of a  base tree.

\smallskip

\q{B5}. {\it There are two adjacent vertices~$a\in T$ and~ $b\in S$, such that~$\N_G(a)\cap \N_G(a')=\varnothing$.}

\noindent We begin with the base tree~$F'$ in which the vertices~$a$ and~$b$ are adjacent to each other and to all vertices from~$\N_G(a)\cap \N_G(b)$. Clearly,~$u(F')= 5$, $c_G(F')\le {1\over 5}+6\cdot{2\over5}={13\over 5} $
 and~$\alpha'(F')\ge 5\cdot {13\over 15}  - c_G(F') \ge {26 \over 15} $.

If any leaf of the tree~$F'$ does not belong to~$T$, then its cost decreases by~$1\over 5$,  hence, $\alpha'(F')$ increases by~$1\over 5$.
In this case we have~$\alpha'(F')\ge {29 \over 15}$, by lemma~\ref{lf115} that is enough. 

Therefore, the remaining case is when all leaves of the tree~$F'$ belong to~$T$, i.e., have degree~4. 
Let us  construct a spanning tree by our algorithm. Consider two cases.

\smallskip \goodbreak
\q{B5.1}. {\it In the process of construction the number of alive leaves was increased.}

\noindent In the beginning this number is equal to~5. Killing  of any number of alive leaves, more than~5, gives us  profit at least~$2\over 15$. Moreover,  profit can be equal to~$2\over 15$ only for~7 or~10 alive leaves. For another number of alive leaves we obtain at least~$3\over 15$ for killing and in addition at least~$1\over 15$ for increasing of the number of alive leafs, that provides~$\alpha(G)\ge 2$.

Let the number of alive leaves was increased to~7 or~10 (i.e. it was increased by~2 or by~5). 
By Remark~\ref{rb7}, for this increasing we have profit at least~$2\over 15$. After that we have additional~$2\over 15$ for killing of alive leaves. That provides~$\alpha(G)\ge 2$.

\smallskip \goodbreak
\q{B5.2}. {\it In the process of construction the number of alive leaves was not increased.}

\noindent Assume, that there was performed a step, which preserves the number of alive leaves and we obtained the tree~$F_1$. 
By Remark~\ref{rb7},  this step has profit at least~$3\over 15$, hence,~$\alpha'(F_1)\ge {29\over 15}$. As we know by Lemma~\ref{lf115}, 
that is enough for~$\alpha(G)\ge 2$.

Let us  consider the remaining case, when all performed steps have decreased the number of alive leave. 
We must kill~5 alive leaves of the tree~$F'$. Any way to do it, except the step~$Z2.1$, provides the profit at least~$4\over 15$ and~$\alpha(G) \ge 2$
(see table~2).

\begin{figure}[!ht]
	\centering
		\includegraphics[width=0.6\columnwidth, keepaspectratio]{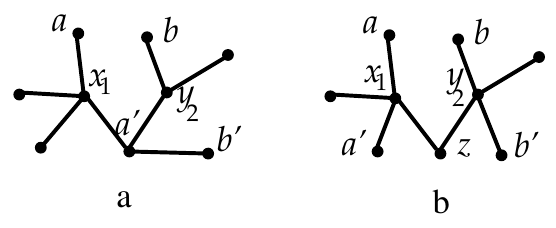}
     \caption{Case $B5.2$.}
	\label{fig10}
\end{figure}

Let it was performed the step~$Z2.1$, on which two adjacent vertices~$a'$ of degree~4 and~$b'$ of degree~3 were added. 
In this case our graph consists of~9 vertices: it contains two copies of the tree~$F'$:  with centers~$a,b$ and with centers~$a',b'$, and with five common leaves. Let these leaves are~$x_1,x_2,x_3\in \N_G(a)$ and~$y_1,y_2\in \N_G(b)$. Since $d_G(x_1)=d_G(x_2)=d_G(x_3)=d_G(y_1)=d_G(y_2)=4$, then~$G(\{x_1,x_2,x_3,y_1,y_2\})$ is a regular graph of degree~2, i.e. a cycle on five vertices. Hence, there exist two independent edges, connecting~$x_1,x_2,x_3$ with~$y_1,y_2$. Let these eges be~$x_1y_1$ and~$x_2y_2$.

Assume without loss of generality, that~$x_1$ and~$y_2$ are non-neighboring vertices of the 5-vertex cycle~~$G(\{x_1,x_2,x_3,y_1,y_2\})$. Then
$a,b,x_2,x_3,y_1 \in \N_G(x_1)\cup \N_G(y_2)$, moreover, one of the vertices~$x_2,x_3,y_1$ belongs to~$\N_G(x_1)\cap \N_G(y_2)$.

If $a'\in \N_G(x_1)\cap \N_G(y_2)$, then we construct a spanning tree in the following way. We connect~$a'$ with~$x_1$ and~$y_2$ 
and adjoin all other vertices to these three (see figure~\ref{fig10}a). Similarly in the case $b'\in \N_G(x_1)\cap \N_G(y_2)$. 

The remaining case is when one of the vertices~$a'$ and~$b'$ is adjacent to~$x_1$, and another~---  to~$y_2$. Then we connect~$x_1$ and~$y_2$ 
with a vertex from~$\N_G(x_1)\cap \N_G(y_2)$ (it is proved above, that such a vertex exists) and adjoin to~$x_1$ and~$y_2$ all other vertices  (see figure~\ref{fig10}b).  We obtain as a result in both cases a spanning tree of the graph~$G$ with~6 leaves. Note, that~$6>{2\over 5} \cdot 7 + {1\over 5} \cdot 2 +2$, i.e. the theorem  in this case is proved.

\begin{rem}
\label{rb6}
1) In remaining cases the steps~$Z2.1$ and~$A4$ are not performed since these steps need the configuration, considered in the case~$B5$. 

2) {\it Further on we assume, that any two adjacent vertices~$a,b\in V(G)$ have a common neighbor.} Assume the contrary, let~$\N_G(a)\cap \N_G(b)=\varnothing$. The case~$a,b\in S$ is impossible --- we would apply the reduction rule~$R2$. If~$a,b\in T$, then the graph~$G$ contains he configuration, considered in the case~$B1$. If one of the vertices~$a,b$ belongs to~$S$ and another to~$T$, then the graph  contains the configuration, considered in the case~$B5$.
\end{rem}

\goodbreak
\q{B6}. {\it The graph does not contain a vertex of degree~$4$.}

\noindent Hence~$G$ is a regular graph of degree~3. In~\cite{KW} it is proved, that  $u(G)\ge s\cdot {1\over 4} +2$ for such graph, 
whence  our theorem follows.

\begin{lem}
\label {lmb1} 
If~$\alpha(G)< 2$, then in each of steps~$M1$, $N1$, $M2$, $N2$,  $M3.1$, $N3.1$, $M3.2$, $N3.2$, $M4.1.1$, $N4.1.1$,
$M4.1.2$, $N4.1.2$,  $M4.5.1$,  $N4.5.1$, 
\linebreak
$M4.5.4.1$,  $N4.5.4.1$, $M4.5.4.2$,  $N4.5.4.2$ there must be an additional (with respect to parameters of this steps) dead leaf.
\end{lem}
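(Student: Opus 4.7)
Proof proposal.

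I would argue by contradiction. Suppose $\alpha(G)<2$ and some listed step is executed by the algorithm with no additional dead leaf beyond the parameters given in Table~2. First I would set up a profit budget. From the preceding base-tree analysis through case~$B5$, in every remaining configuration the base tree $F'$ satisfies $\alpha'(F')\ge{26\over15}$. Since $\alpha(T)=\alpha'(F')+\sum_i p(A_i)$ for the spanning tree $T$ produced, the hypothesis $\alpha(G)<2$ forces $\sum_i p(A_i)<{4\over15}$; combined with Remark~\ref{rb7}(1), which gives $p(A_i)\ge{1\over15}$ for every step, at most three steps can occur after the base.

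I would then split the 18 listed steps into three groups by profit. \textbf{Group~1 (profit $\ge{4\over15}$):} $N1$, $M2$, $N2$, $M3.1$, $N3.1$, $M3.2$, $N3.2$, $N4.1.2$, $N4.5.1$, $M4.5.4.1$, $N4.5.4.1$, $M4.5.4.2$, $N4.5.4.2$. Each such step alone already contributes $\ge{4\over15}$, directly contradicting the budget — so under $\alpha(G)<2$ these steps cannot occur at all, and the lemma's conclusion is vacuous for them.

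\textbf{Group~2 (profit ${3\over15}$):} $M1$, $N4.1.1$, $M4.1.2$, $M4.5.1$. Each has $\Delta u-\Delta b=0$ but introduces new leaves into the tree. Without any extra dead leaf those new leaves are alive, so the constructed tree is not yet spanning and at least one further step must be performed; by Remark~\ref{rb7}(1) the next step contributes $\ge{1\over15}$, giving $\sum_i p(A_i)\ge{4\over15}$, contradiction.

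\textbf{Group~3 (profit ${2\over15}$):} $M4.1.1$, with $\Delta u-\Delta b=-1$. If $M4.1.1$ is the last step of the algorithm, then the resulting tree is spanning and all its leaves — in particular the newly added leaves $y_2,z_1,z_2$ — must be dead, yielding extra dead leaves that contradict the assumption. Otherwise a closing step must still come, with $\Delta u-\Delta b\le0$ (else alive leaves would survive); inspection of Table~2, after the post-$B5$ exclusions of $Z2.1$ and $A4$, shows that every step with $\Delta u-\Delta b\le0$ has profit $\ge{2\over15}$. Either the tail consists of a single such step, contributing $\ge{2\over15}$, or it contains at least two steps, each contributing $\ge{1\over15}$ by Remark~\ref{rb7}(1); in both cases $\sum_i p(A_i)\ge{2\over15}+{2\over15}={4\over15}$, contradiction. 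The main obstacle is precisely this $M4.1.1$ case, since it requires the small but essential scan of Table~2 verifying that, once $Z2.1$ is excluded, every alive-leaf-decreasing step costs at least ${2\over15}$; the other groups reduce to a one-line budget argument.
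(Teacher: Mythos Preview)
Your argument rests on the claim that, once cases $B1$--$B5$ have been disposed of, the base tree $F'$ always satisfies $\alpha'(F')\ge\tfrac{26}{15}$. That is false. The value $\tfrac{26}{15}$ occurs only \emph{inside} case~$B5$, for that particular configuration; once $B5$ (and $B6$) are excluded we are in case~$B7$, where the base tree is a star on a degree-$4$ vertex and one has only
\[
\alpha'(F')\ \ge\ 4\cdot\tfrac{13}{15}-5\cdot\tfrac{2}{5}\ =\ \tfrac{22}{15}.
\]
With the corrected bound the budget becomes $\sum_i p(A_i)<\tfrac{8}{15}$, not $\tfrac{4}{15}$, and the entire case split collapses: the largest profit among the steps you list is $\tfrac{6}{15}$ (for $N3.1$), so your Group~1 is empty; and with up to seven further steps permitted by the budget rather than three, the Group~2 and Group~3 arguments (which hinge on almost no further steps being possible) also fail.

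The paper's proof is entirely different and does not use a profit budget or any property of the base tree. It is a purely local structural argument. In every listed step some newly added leaf $v$ is adjacent to the old tree $F$ (for $M1,N1,M2,N2$ and the $4.5.4$ variants this is exactly where Lemma~\ref{lmb} and the hypothesis $\alpha(G)<2$ are invoked; for the others it is built into the step description). Let $w$ be the parent of $v$ in the adjoined subtree $F_0$. By Remark~\ref{rb6} (available once $B5$ is excluded) the adjacent pair $v,w$ has a common neighbour $a$; a vertex of $F$ cannot have two neighbours in $W$, so $a\in W$, and since every neighbour of the non-leaf $w$ was adjoined, $a\in V(F_0)$. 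Thus the level-$1$ vertex $v$ already has two neighbours $w,a$ in $W$, hence by Remark~\ref{rl1} it has no others, so $v$ is a dead leaf of the new tree --- one not counted in the step's parameters. That structural observation, via Remark~\ref{rb6}, is the missing idea.
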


\begin{proof}
Let us remind details of $MN$-steps. Let~$F$ be a tree before the step, $W=V(G)\setminus V(F)$. We have adjoined to the tree~$F$ a subtree (call it by~$F_0$) with a root~$x\in W$ and obtained after this step the tree~$F_1$. All vertices of~$W$, adjacent to~$V(F)$, are called by vertices of level~1.

Consider any step from our lemma. On this step one of  added to the tree~$F$ leaves~$v$ was adjacent to~$F$ (for the  steps
 $M1$, $N1$, $M2$, $N2$, $M4.5.5.1$,  $N4.5.4.1$, $M4.5.4.2$,  $N4.5.4.2$ it follows from Lemma~\ref{lmb}, for the steps~$M3.1$, $N3.1$, $M3.2$, $N3.2$ it was shown in detais of the step~$3$, for all other steps it follows from their description).

Note, that~$v\ne x$ (in all $MN$-steps the vertex~$x$ is not a leaf). Let us consider the ancestor~$w$ of the leaf~$v$ in the added tree~$F_0$.
By Remark~\ref{rb6} we have~$\N_G(v)\cap \N_G(w)\ne\varnothing$. Clearly,~$v,w\in W$.

Let~$a$ be a common neighbor of~$v$ and~$w$.   Clearly,~$a\not \in V(F)$, since a vertex of the tree~$F$ cannot be adjacent to two vertices of the set~$W$ by Remark~\ref{rl1}. By the construction, any vertex, adjacent to~$w$ belongs either to~$V(F)$ or to~$V(F_0)$. Hence,~$a\in V(F_0)$. Therefore, $v$ has two adjacent vertices~$w,a \in W$,  which belong to the tree~$F_1$, constructed after the step.
The vertex~$v\in W$ is adjacent to~$V(F)$, hence, it belongs to level~1. Then by Remark~\ref{rl1} the vertex~$v$ cannot be adjacent to more than two vertices from~$W$, thus the vertex~$v$ is a dead leaf of the tree~$F_1$. Clearly, it was not calculated in the parameters of the step.
\end{proof}

Before the last and most complicated case let us rewrite the table of parameters of the steps. We add dead leaves and update the profit for all steps of lemma~\ref{lmb1}.  In addition, we exclude the steps~$Z2.1$ and~$A4$, which are impossible due to remark~\ref{rb6}.
Updated parameters of the steps are presented in table~3.

\smallskip \goodbreak

\q{B7}. {\it The graph does not satisfy the condition of any previous case.}

\noindent Then there exists a vertex~$a$ of degree~$4$ in our graph. We connect~$a$ with its 4 neighbors and obtain the tree~$F'$ with~4 leaves~$v_1,v_2,v_3,v_4$ and~$\alpha'(F')\ge 4\cdot{13\over 15}-5\cdot{2\over 5}= {22\over 15}$. 
Let us continue the construction of a spanning tree by our algorithm and  
consider all performed steps, which have increased the number of alive leaves. We calculate the sum of  increases of the number of alive leaves over  all these steps and denote this sum by~$\ell$. 
Then the steps which decrease the number of alive leaves must kill~$\ell +4$  alive leaves. Consider several cases.

\smallskip \goodbreak
\q{B7.1}.  $\ell\ge 2$.

\noindent  It is easy to see from table~3, that adding of~$\ell$ alive leaves provides us profit at least~$\ell\over15$.
For~$\ell=2$ we must kill~6 alive leaves.  For this we obtain  profit at least~$6\over 15$ (see table~3), that provides~$\alpha(G) \ge 2$.
 
For~$\ell=3$ we must kill~7 alive leaves.  For this we obtain  profit at least~$5\over 15$ (see table~3), that provides~$\alpha(G) \ge 2$. 

For~$\ell\ge 4$ we must kill at least~8 alive leaves.  For this we obtain  profit at least~$4\over 15$ (see table~3), that also provides~$\alpha(G) \ge 2$.

\medskip

{
\begin{tabular}{|@{\quad}c@{\quad}|@{\quad}c@{\quad}|@{\quad}c@{\quad}|}
\hline
Step &   $\Delta u-\Delta b$ &  $15\cdot$profit  \\ [2pt]
\hline
$A1$ &   1 &  $7$  \\
\hline
$A2$ &   1 & 1  \\
\hline
$A3$   &   2 &  2  \\
\hline
$M1$, \quad $M4.1.2$, \quad $M4.5.1$, \quad $N4.1.1$  &   $-1$  &  5  \\
\hline
$N1$, \quad $N4.1.2$, \quad $N4.5.1$, \quad $M4.5.4.1$ &   0 &  6  \\
\hline
$M2$,\quad $N4.5.4.1$, \quad $M4.5.4.2$ &   1 &  7  \\
\hline
$N2$, \quad $N4.5.4.2$   &   2 &  8  \\
\hline
$M3.1$   &   $-5$ &  $7$  \\
\hline
$N3.1$   &   $-4$ &  $8$  \\
\hline
$M3.2$   &   $-3$ &  $5$  \\
\hline
$N3.2$   &   $-2$ &  $6$  \\
\hline
$M4.1.1$  &   $-2$ &  $4$  \\
\hline
$Z0$  &   $-1$ &  $2$  \\
\hline
$Z1.1$   &   $-4$  &  $2$  \\
\hline
$Z1.2$  &   $-3$  &  $3$  \\
\hline
\end{tabular}
}

\medskip
\centerline{Table~3.}

\smallskip

\q{B7.2}.  $\ell= 0$.

\noindent
It is easy to see from table~3, that the last step of algorithm has   profit at least~$2\over 15$. If it was performed a step, which preserves the number of alive leaves, it has profit at least~$6\over 15$. That is enough for~$\alpha(G) \ge 2$. 
Thus, only steps decreasing the number of alive leaves were performed.  It is easy to see from table~3, that there are three ways to kill 4 
alive leaves and not provide profit~$8\over 15$ (and~$\alpha(G)\ge 2$):  

---  step~$Z0$ together with step~$M3.2$ (sum of profits~$7\over 15$);

--- step~$Z0$ together with step~$Z1.2$ (sum of profits~$5\over 15$);

--- step~$Z1.1$ (profit~$2\over 15$).

Consider all these cases.

\smallskip
\q{B7.2.1}.  {\it Step~$Z0$ and step~$M3.2$ were performed.}

\noindent  Let we have performed the step~$M3.2$ and~$F$ be the tree before this step.
Then~$F$ must have at least 5 alive leafs (see figure~\ref{fig4}: 
two leaves of~$F$ must be adjacent to~$y_1$, one leaf~--- to~$y_2$ and, since~$x\in T$ for $M$-step, two leaves of~$F$ must be adjacent to~$x$). But in our case~$F=F'$ and this tree have only~4 leaves. We have a contradiction.

\smallskip \goodbreak
\q{B7.2.2}.  {\it   Step~$Z0$ and step~$Z1.2$ were performed.}

\noindent
We have profit~$5\over 15$, hence~$\alpha(G) \ge {27\over 15}$.
If at least one leaf of the tree~$F'$ does not belong to~$T$, then profit increases by~$3\over 15$, that provides~$\alpha \ge 2$. 
Thus, all leaves of~$F'$ belong to~$T$  and have degree~4  in the graph~$G$. 
Then there are exactly 6 vertices in the graph~$G$: five vertices of degree~4 and one vertex of degree~3 (added on the step~$Z1.2$). 
Clearly, this is impossible.

\smallskip \goodbreak
\q{B7.2.3}.  {\it Step~$Z1.1$ was performed.}

\noindent We have  profit~$2\over 15$, hence~$\alpha(G) \ge  {24\over 15}={8\over 5}$. 
The vertex, added on the  step~$Z1.1$ has degree~4. If at least two of vertices~$v_1,v_2,v_3,v_4$ do not belong to~$T$, 
the profit increases by~$6\over 15$, that provides~$\alpha(G) \ge 2$. If exactly one of these vertices does not belong to~$T$, then
the graph~$G$ contains five vertices of degree~4 and one vertex of degree~3, that is impossible.  Therefore, the only variant 
for~$\alpha(G)<2$ in our case is a regular graph of degree~4 on~6 vertices. Clearly, such graph  is unique --- it is~$C_6^2$, and this graph
is really an exclusion ($\alpha(C_6^2)={8\over 5}$).

\smallskip \goodbreak
\q{B7.3}.  $\ell= 1$.

\noindent That is we have performed exactly one step, increasing the number of alive leaves, and this number was increased exactly by~1. It is easy to see from  table~3, that it is either step~$A2$,  or we have obtained profit at least~$7\over 15$ and have constructed a tree~$F_1$ with~$\alpha'(F_1)\ge {29\over 15}$. In the last case by lemma~\ref{lf115} we have~$\alpha(G)\ge 2$ and finish the proof.

Thus we have done one step~$A2$ with profit~$1\over 15$ and have obtained the tree~$F_1$ with~$\alpha'(F_1)\ge {23\over 15}$. As above, 
if we have performed a step, which preserves the number of alive leaves, then~$\alpha(G)\ge 2$. 
Hence, the step~$A2$ is the only step except steps which decrease the number of alive leaves.
It is easy to see from table~3, that there is the only  way to kill 5 
alive leaves and not provide profit~$7\over 15$ (and~$\alpha(G)\ge 2$):   step~$Z0$ together with step~$Z1.1$ (sum of profits~$4\over 15$, we add a vertex of degree~4). These steps provide~$\alpha(G)\ge {27\over 15}$.

If at least one of the vertices~$v_1,v_2,v_3,v_4$ or two vertices, added on the step~$A2$, has degree~3, then the profit increases by~$1\over 5$. That provides~$\alpha(G)\ge 2$. 

Let us consider the last case --- when~$G$ is a regular graph of degree~$4$ on $8$ vertices. Clearly, such a graph is an exclusion if and only
if~$u(G)\le 5$. i.e.~$G$ does not have a  spanning tree with at least~6 leaves. It is easy to verify, that for $4$-regular graph~$G$ on $8$ vertices~$u(G)\le 5$ if and only if  {\it the neigborhoods of any two adjacent vertices have non-empty intersection}, i.e. {\it each edge belongs to a triangle}. 

\smallskip 
Let~$G$ be a 4-regular graph  on $8$ vertices, such that any its edge belongs to a triangle. Let us ensure, that up to isomorphism there are
two such graphs.

If~$G$ is a vertex  4-connected graph, we make use of the work~\cite{Mart} ---  it is proved there, that~$G$ is either a square of cycle or an edge graph of  4-cycle-connected cubic graph. The second case is impossible, since the number of vertices of such edge graph must be divisible by~3. The first case gives us the graph~$C_8^2$, which is really an exclusion.

Let~$G$ has a cutset~$R$, which consists of less than~4 vertices. It is easy to see, that if a set of~${4-k}$ vertices  separates in a 
$4$-regular graph a connected component~$H$, than~$v(H)\ge k+1$. Hence,  $|R|\ge 2$. 

If~$|R|=2$, then there is the only possibility: the cutset~$R$  must split the graph into exactly two connected components, each  component contains three vertices, and each of these 6 vertices must be adjacent to each of two vertices of the set~$R$. But then the  vertices of the set~$R$ have degree~6. We obtain a contradiction.

Let~$|R|=3$, $R=\{r_1,r_2,r_3\}$. Then one of connected components has two vertices (let these vertices are~$a_1,a_2$), and another component has three vertices ($b_1,b_2,b_3$). Clearly, $a_1$ and $a_2$ are adjacent and  each of them is adjacent to each of the vertices~$r_1,r_2,r_3$ (otherwise~$d_G(a_i)<4$). 
Hence, each of the vertices~$r_1,r_2,r_3$ is adjacent to not more than two of vertices~$b_1,b_2,b_3$, therefore, the sum of vertex degrees of the graph~$G(\{b_1,b_2,b_3\})$ is at least~6, i.\,e. this graph is complete. Consequently, each of the vertices~$b_1,b_2,b_3$ is adjacent to exactly two of vertices~$r_1,r_2,r_3$ and  each of vertices~$r_1,r_2,r_3$ is adjacent to exactly two of vertices~$b_1,b_2,b_3$, i.\,e. the vertices~$r_1,r_2,r_3$ are pairwise non-adjacent. Now it is clear, that there is only one such graph up to isomorphism --- the graph~$G_8$, shown on figure~1.

\subsection {Reduction and counterexamples}

{\it Let us prove, that if reduction rule~$R1$ or~$R2$ was applied to a graph~$G$, then the graph~$G$ is not an exclusion.}

As we know, applying of reduction rules~$R1$ and~$R2$ does not decrease~$\alpha(G)$.  Hence, it is enough to verify, that~$\alpha(G)\ge 2$ 
for a graph~$G$, which can be transformed to~$C_6^2$, $C_8^2$ or~$G_8$ by applying one reduction rule~$R1$ or~$R2$. Consider 6 cases.

\smallskip
\q1. {\it The graph~$G$ can be transformed to~$C_6^2$ by applying reduction rule~$R1$.} 

\noindent Let~$G$ can be transformed to the square of cycle~$a_1a_2a_3a_4a_5a_6$ by deleting a vertex~$w$ of degree~2 and adding  an edge connecting two vertices of~$\N_G(w)$. Without loss of generality we can consider two cases:  the added edge is~$a_1a_2$ or~$a_1a_3$. In both cases it is easy to construct a spanning tree with~5 leaves: see figures~\ref{fig11}a and~\ref{fig11}b.
Hence, $u(G)\ge 5> 6\cdot {2\over 5}+2$ and~$\alpha(G)> 2$.

\begin{figure}[!ht]
	\centering
		\includegraphics[width=1\columnwidth, keepaspectratio]{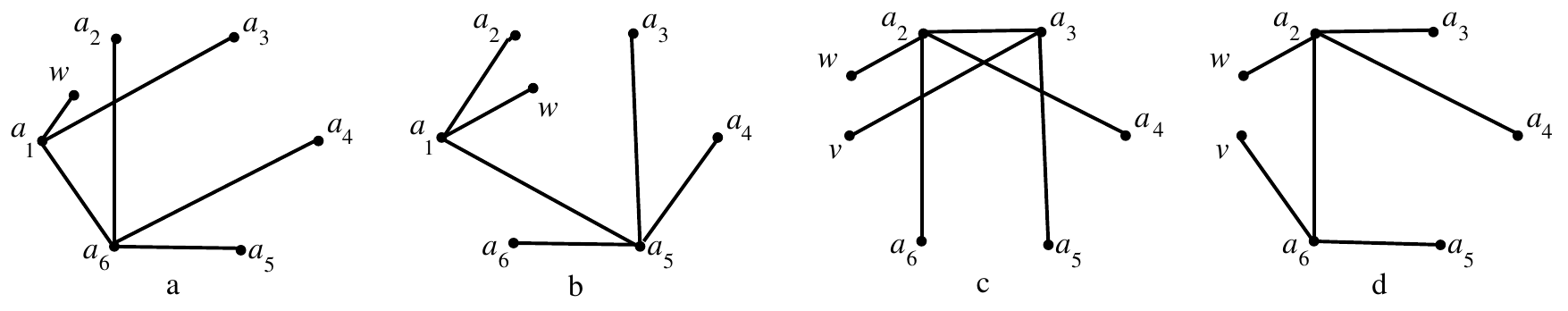}
     \caption{Reduction: case of~$C_6^2$.}
	\label{fig11}
\end{figure} 

\goodbreak
\q2. {\it The graph~$G$ can be transformed to~$C_6^2$ by applying reduction rule~$R2$.} 

\noindent  Let~$G$ can be transformed to the square of cycle~$a_1a_2a_3a_4a_5a_6$ by contracting an edge~$vw$, where~$d_G(v)=d_G(w)=3$.
Let the vertex~$a_1$ is the result of gluing of~$v$ and~$w$. Then~$v$ and~$w$ together are adjacent in~$G$ to~$a_2,a_3,a_6,a_5$. 
Without loss of generality we assume, that~$w$ is adjacent to~$a_2$ in the graph~$G$.
If~$a_3$ is adjacent in~$G$ to~$v$ then   we construct a spanning tree of the graph~$G$ to~$5$ leaves  as  on figure~\ref{fig11}c.
If~$a_3$ is adjacent in~$G$ to~$w$,  then~$a_6$ is adjacent in~$G$ with~$v$. In this case  a spanning tree of the graph~$G$  with~$5$ leaves  is shown on figure~\ref{fig11}d.
Hence, $u(G)\ge 5> 6\cdot {2\over 5}+2$ and~$\alpha(G)> 2$.

\goodbreak \smallskip
\q3. {\it The graph~$G$ can be transformed to~$C_8^2$ by applying reduction rule~$R1$.} 

\noindent 
Let~$G$ can be transformed to the square of cycle~$a_1a_2\dots a_8$ by deleting a vertex~$w$ of degree~2 and adding  an edge connecting two vertices of~$\N_G(w)$. Without loss of generality we can consider two cases:  the added edge is~$a_1a_2$ or~$a_1a_3$. In both cases it is easy to construct a spanning tree with~6 leaves: see figures~\ref{fig12}a and~\ref{fig12}b.
Hence,  $u(G)\ge 6> 8\cdot {2\over 5}+2$  and~$\alpha(G)> 2$.

\begin{figure}[!hb]
	\centering
		\includegraphics[width=1\columnwidth, keepaspectratio]{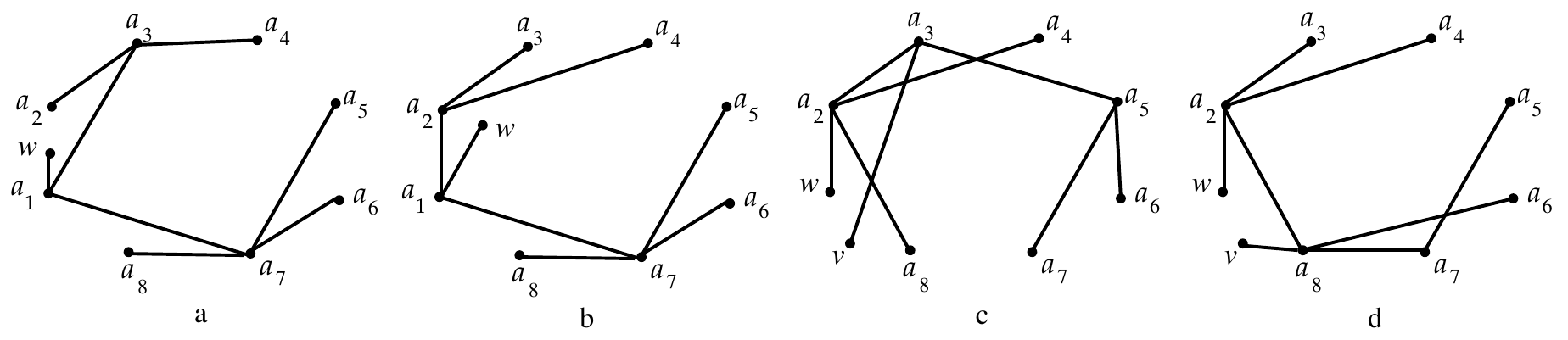}
     \caption{Reduction:  case of~$C_8^2$.}
	\label{fig12}
\end{figure} 

\q4. {\it The graph~$G$ can be transformed to~$C_8^2$ by applying reduction rule~$R2$.} 

\noindent
Let~$G$ can be transformed to the square of cycle~$a_1a_2\dots a_8$ by contracting an edge~$vw$, where~$d_G(v)=d_G(w)=3$.
Let the vertex~$a_1$ is the result of gluing of~$v$ and~$w$. Then~$v$ and~$w$ together are adjacent  in~$G$ to~$a_2,a_3,a_8,a_7$. 
Without loss of generality we assume, that~$w$ is adjacent to~$a_2$ in the graph~$G$.
If~$a_3$ is adjacent in~$G$ to~$v$ then we construct a spanning tree of the graph~$G$ with~$6$ leaves  as on figure~\ref{fig12}c.
If~$a_3$ is adjacent in~$G$ to~$w$,  then~$a_8$ is adjacent in~$G$ to~$v$. In this case  a spanning tree of the graph~$G$ with~$6$ leaves   is shown on figure~\ref{fig12}d. Hence,~$u(G)\ge 6$  and~$\alpha(G)> 2$.

\smallskip \goodbreak
\q5. {\it The graph~$G$ can be transformed to~$G_8$ by applying reduction rule~$R1$.}

\noindent  Let~$G$ can be transformed to the graph~$G_8$ by deleting a vertex~$w$ of degree~2 and adding  an edge connecting two vertices of~$\N_G(w)$. We use for~$G_8$ the same notations as on figure~\ref{fig1}.
By symmetry of the graph~$G_8$ it is enough to consider  four cases:  the added edge is~$a_1a_2$ (a spanning tree with~6 leaves is shown on figure~\ref{fig13}a), $b_1b_3$ (figure~\ref{fig13}b), $r_3b_3$ (figure~\ref{fig13}c) and~$r_3a_1$ (figure~\ref{fig13}d). 
Hence, in any case~$u(G)\ge 6$  and~$\alpha(G)> 2$.

\begin{figure}[!tb]
	\centering
		\includegraphics[width=\columnwidth, keepaspectratio]{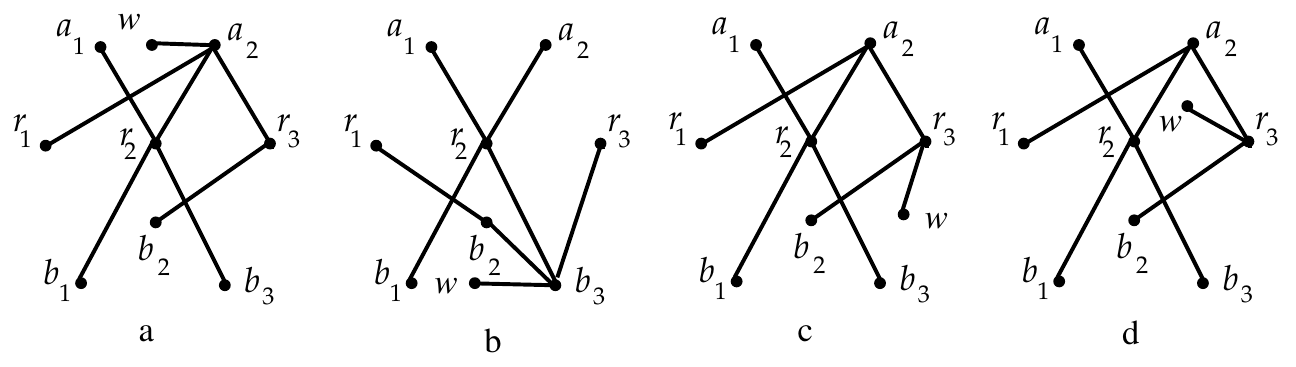}
     \caption{Reduction:  case of the graph~$G_8$ and rule~$R1$.}
	\label{fig13}
\end{figure}

\smallskip

\smallskip \goodbreak
\q6. {\it The graph~$G$ can be transformed to~$G_8$ by applying reduction rule~$R2$.} 

\noindent
Let~$G$ can be transformed to the graph~$G_8$ by contracting an edge~$vw$, where~$d_G(v)=d_G(w)=3$.
We use for~$G_8$ the notations of previous case. By symmetry of the graph~$G_8$ it is enough to consider  three cases:
the vertices~$v$ and~$w$ of the graph~$G$ can be contracted into one of the vertices~$a_1$, $b_1$, $r_1$. 

If it is~$a_1$, then~$\N_G(w)\cup \N_G(v) =\{a_2,r_1,r_2,r_3\}$.
Consider the tree~$F_1$, shown on figure~\ref{fig14}a. It has three non-pendant vertices~$a_2,r_2,r_3$  and each of vertices~$w$ and~$v$ 
is adjacent in the graph~$G$  to one of them. Thus,~$F_1$ can be transformed to a spanning tree of the graph~$G$ with~6 leaves.

\begin{figure}[!hb]
	\centering
		\includegraphics[width=0.85\columnwidth, keepaspectratio]{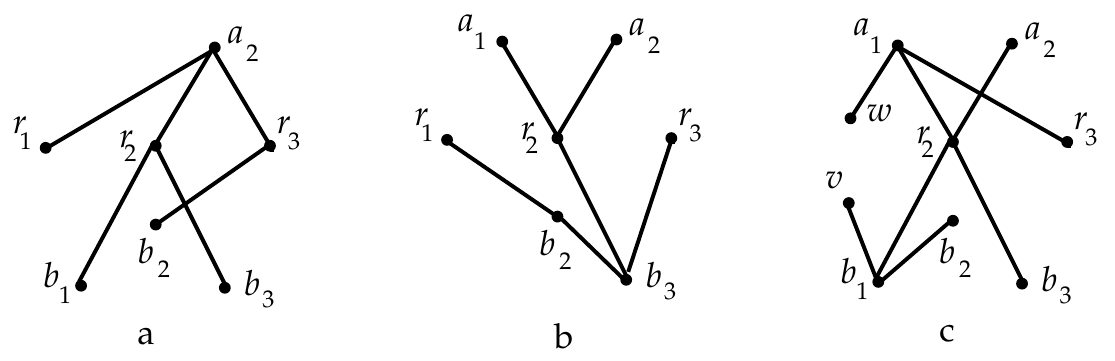}
     \caption{Reduction:  case of the graph~$G_8$ and rule~$R2$.}
	\label{fig14}
\end{figure} 

If~$v$ and~$w$ are contracted into the vertex~$b_1$, then~$\N_G(w)\cup \N_G(v) =\{b_2,b_3,r_1,r_2\}$.
Consider the tree~$F_2$, shown on figure~\ref{fig14}b. It has three non-pendant vertices~$b_2,b_3,r_2$  and each of vertices~$w$ and~$v$ 
is adjacent in the graph~$G$  to one of them. Thus,~$F_2$ can be transformed to a spanning tree of the graph~$G$ with~6 leaves.

If~$v$ and~$w$ are contracted into the vertex~$r_1$, then~$\N_G(w)\cup \N_G(v)=\{a_1,a_2,b_1,b_2\}$.
By symmetry it is enough to consider two cases:

--- $a_1,a_2\in \N_G(w)$, \quad $b_1,b_2\in \N_G(v)$; 

--- $a_1,b_2\in \N_G(w)$, \quad  $a_2,b_1\in \N_G(v)$. 

\noindent
In both cases we construct a spanning tree of the graph~$G$ with 6 leaves, as on figure~\ref{fig14}c.

Thus, in any case we have~$u(G)\ge 6$ and~$\alpha(G)> 2$.

\smallskip
Now we have completely proved Theorem~\ref{u34}.

\section{Extremal examples}

There are a lot of infinite series of graphs~$G$, containing~$s>0$ vertices of degree~3 and~$t>0$ vertices of degree more than~3, 
such that~$u(G)={2\over 5} t+ {1\over 4} s+2$. We introduce series of graphs, all vertices of which have degrees~3 and~4. 
Thus, these graphs are also counterexamples to the strong Linial's conjecture (see introduction).

Let us begin  construction of our graphs. Let~$D_i$ be a graph on vertex set~$x_i,y_i,z_i,v_i,a_i,b_i$, where the vertices~$x_i,y_i,z_i,v_i$  are pairwise adjacent, the vertex~$a_i$ is adjacent to~$x_i$ and~$y_i$, the vertex~$b_i$ is adjacent to~$z_i$ and~$v_i$. 
We make a cycle of such graphs~$D_1$,\dots, $D_n$ (where~$n>1$) and connect~$a_{i+1}$ with~$b_i$ (we set~$n+1=1$). 
The obtained graph we denote by~$H_n$  (see figure~\ref{fig15}). Clearly, $c(H_n)=2n\cdot {1\over 5}+ 4n\cdot {2\over 5} = 2n$.

\begin{figure}[!hb]
	\centering
		\includegraphics[width=0.7\columnwidth, keepaspectratio]{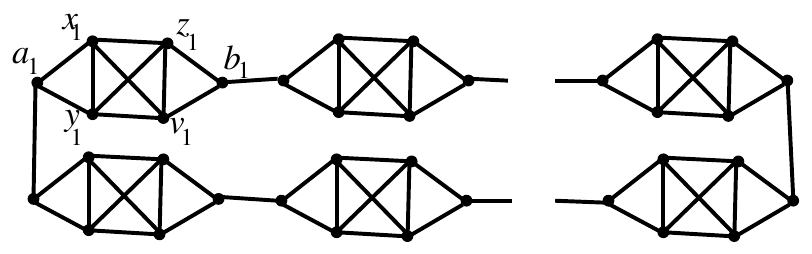}
     \caption{Extremal examples.}
	\label{fig15}
\end{figure}

Note, that the set of leaves of any spanning tree~$T$ of the graph~$H_n$ is not a cutset in~$H_n$.
With the help of this fact it is easy to see, that~$u(H_n)=2n+2=c(H_n)+2$.

\end{document}